\numberwithin{equation}{section}
\newtheorem{theorem}{\bf{Theorem}}[section]
\newtheorem{lemma}{\bf {Lemma}}[section]
\newtheorem{define}{\bf{Definition}}[section]
\newtheorem{remark}{\bf{Remark}}[section]
\newcommand{\mbE}{\widehat{\mathbb{E}}}
\newcommand{\mbe}{\widehat{\mathcal{E}}}
\newcommand{\V}{\mathbb{V}}
\newcommand{\mv}{\mathcal{V}}
\newcommand{\sles}{(\Omega,\mathcal{H},\mbE)}
\newcommand{\um}{\overline{\mu}}
\newcommand{\lm}{\underline{\mu}}
\newcommand*{\dif}{\mathop{}\!\mathrm{d}}
\newcommand{\Vstar}{\widehat{\mathbb{V}}^*}
\newcommand{\mvstar}{\widehat{\mathcal{V}}^*}
\newcommand{\hV}{\widehat{\mathbb{V}}}
\newcommand{\mhV}{\widehat{\mathcal{V}}}
\newcommand{\bE}{\breve{\mathbb{E}}}
\newcommand{\be}{\breve{\mathcal{E}}}
\title{Strong law of large numbers for $m$-dependent and stationary random variables under sub-linear expectations\thanks{This work was supported by grants from the NSF of China  (Grant Nos. U23A2064 and 12031005)} \footnote{This paper is submitted to a special issue of Science in China-Mathematics (Chinese)  to congratulate  Professor Lin Zhengyan on his 85th birthday}}
\author{Wang-Yun Gu, Li-Xin Zhang\footnote{Corresponding author, email:stazlx@zju.edu.cn }}
\date{}
\begin{document}
\maketitle

\begin{abstract}
\par The arm of this paper is to establish  the strong law of large numbers (SLLN) of $m$-dependent random variables under the framework of sub-linear expectations. We establish the SLLN for a sequence of independent, but not necessarily identically distributed random variables. The study further extends the SLLN to $m$-dependent and stationary sequence of random variables with the condition $C_{\hV}(|X_1|)<\infty$ which is the sufficient and necessary condition of SLLN in the case of independent and identically distributed random variables.\\
\par \bf{Keywords:}\rm\quad Sub-linear expectation, Capacity, Law of large numbers, $m$-dependent, Stationary.
\end{abstract}

\section{Introduction}
\par The classical law of large numbers (LLN) stands as a cornerstone of probability theory, underpinning the predictable behavior of sample averages as the sample size grows. The well-known Kolmogorov\cite{Kol30}'s strong law of large numbers (SLLN) typically assumes sequences of independent and identically distributed (i.i.d.) random variables.
However, this assumption does not account for the complexities of real-world data, which often involve non-identical distributions and dependencies among observations. The laws of large numbers as well as the central limit theorems for independent random variables have been extended for various kinds of dependent random variables.
Since Professor Lin Zhengyan published his first paper\cite{Lzy1965} on the limit theory of dependent variables in 1965, he has been conducting decades of research on various weak and strong limit theorems of dependent variables and has published over 90 related papers (c.f. https://mathscinet.ams.org/mathscinet/author?authorId=191973). The monograph\cite {LzyLcr} published in 1997 summarized Lin's achievements in mixing dependent variables. 
As a direct extension of independence, $m$-dependence is a common weakening condition.  In 1980 and 1981, Lin Zhengyan\cite{LZY1980, LZY1981} proved the central limit theorem and weak invariance principle for $m$-dependent and stationary random variables without finite variances.  Compared to the central limit theorem, in the classical probability framework, due to the additivity of expectations and probabilities, the law of large numbers for a sequence of $m$-dependent random variables can be easily derived from the results for independent variables. Recent works on this topic can be found in Zhang \cite {zhang98} and Thanh \cite {Thanh05} who showed the strong laws of large numbers under the block-wise and pair-wise $m$-dependent settings. 

\par Sub-linear expectations, introduced by Peng Shige, offer a framework for handling uncertainties by relaxing the linearity and additivity conditions of classical expectations. Peng\cite{Peng08} proved the weak law of large numbers and the central limit theorem under his framework of sub-linear expectations. Not long after, Professor Lin Zhengyan organized seminars at Zhejiang University on the theory of sub-linear expectations and vigorously suggested studying the limit theorems under the framework of sub-linear expectations. Lin and Zhang\cite {LZ2017} proved that the self-normalized partial sum process of i.i.d. random variables weakly converges to a $G$-Brownian motion self-normalized by its quadratic variation, and obtained the self-normalized central limit theorem with limit distributions quite different from those in classical probability theory. Chen\cite{chen16} and Zhang\cite{zhang16} obtained the strong law of large numbers for i.i.d. random variables under the sub-linear expectation. However, to obtain the precise limits and limit points, Chen\cite{chen16} and Zhang\cite{zhang16} assumed that the capacity related to the sub-linear expectation is continuous. Zhang\cite{zhang21} pointed out that the continuity of the capacity is a very strict condition, and showed that if a capacity related to the sub-linear expectation is continuous and there is a sequence $\{X_i;i\ge 1\}$ of i.i.d. random variables in the sub-linear expectation space, then the sub-linear expectation is a linear expectation on the space of $\sigma(X_i;i\ge 1)$-measurable functions.  
In recent works,  Song \cite {Song2022} obtained a strong law of large numbers for i.i.d. random variables relative to a sub-linear expectation represented by a weakly compact family of probability measures on a Polish space under the $(1+\alpha)$th moment condition, and, Zhang\cite{zhang23b} gave a reasonable condition on the general sub-linear expectation and the capacity for the strong limit laws, and established the sufficient and necessary conditions of strong law of large numbers for i.i.d. random variables. However, the limit theorems for the dependent random variables under the sub-linear expectation are really rare.   Lin\cite{lin22} proved a weak LLN for $m$-dependent random variables with the method of Li\cite{Li15}. This paper aims to extend the strong law of large numbers into this broader context. Due to the nonadditivity of sub-linear expectations and the discontinuity of capacities, the most basic law of large numbers cannot be directly obtained from the results for i.i.d. random variables in the case of $m$-dependence. In this paper, we first establish a general law of large numbers for independent but non-identically distributed sequences under sub-linear expectations, thereby capturing a wider array of possible scenarios. Based on this, we borrow the technique of segmenting summation to piecewise summations of large and small blocks used by Lin Zhengyan \cite{Lzy1965, LZY1981} to obtain the strong law of large numbers for $m$-dependent and linearly stationary random variables under conditions as weak as those in the i.i.d case. Meanwhile, we also proved that even for $m$-dependent and identically distributed random variables, the existence of Choquet's expectation is still a necessary condition for the strong law of numbers to hold.

\par The remainder of this paper is structured as follows. In Section \ref{sec:setting}, we briefly introduce the framework of sub-linear expectations.  Our main theorems are presented in Section \ref{sec:result} and their detailed proofs are given in Section \ref{sec:proof}. In  Section \ref{sec:inequality}, we list the inequalities needed in our proofs.

\section{Basic settings}\label{sec:setting}
\par We use the framework and notations of Peng\cite{Peng08, Peng09, Peng19}. If one is familiar with these notations, he or she can skip this section. Let $(\Omega,\mathcal{F})$ be a given measurable space and let $\mathcal{H}$ be a linear space of real functions defined on  $(\Omega,\mathcal{F})$ such that if $X_1,\cdots, X_n\in\mathcal{H}$, then $\varphi(X_1,\cdots,X_n)\in\mathcal{H}$ for each $\varphi\in C_{l,Lip}(\mathbb{R}^n)$, where $C_{l,Lip}(\mathbb{R}^n)$ denotes the linear space of local Lipschitz functions $\varphi$ satisfying
\begin{align*}
\vert \varphi(\bm{x})-\varphi(\bm{y})& \vert\leq C(1+\vert \bm{x}\vert^m+\vert\bm{y}\vert^m)\vert\bm{x}-\bm{y}\vert,\quad\forall\bm{x},\bm{y}\in\mathbb{R}^n,\\
 & \text{ for some } C>0, m\in\mathbb{N} \text{  depending on } \varphi.
 \end{align*}
$\mathcal{H}$ is considered as a space of "random variables". We also denote $C_{b, Lip}(\mathbb{R}^n)$ the space of bounded Lipschitz functions. In this case, we denote $X\in\mathcal{H}$.
\begin{define}
A sub-linear expectation $\mbE$ on $\mathcal{H}$ is a function $\mbE:\mathcal{H}\rightarrow\bar{\mathbb{R}}$ satisfying the following properties: for all $X,Y\in\mathcal{H}$, we have
\begin{itemize}
\item[(a)] Monotonicity: If $X\geq Y$, then $\mbE[X]\geq\mbE[Y]$;
\item[(b)] Constant preserving: $\mbE[c]=c$;
\item[(c)] Sub-additivity: $\mbE[X+Y]\leq\mbE[X]+\mbE[Y]$ whenever $\mbE[X]+\mbE[Y]$ is not of the form $+\infty-\infty$ or $-\infty+\infty$;
\item[(d)] Positive homogeneity: $\mbE[\lambda X]=\lambda\mbE[X]$ for $\lambda>0$.
\end{itemize}
Here, $\bar{\mathbb{R}}=[-\infty,\infty]$, $0\cdot\infty$ is defined to be 0. The triple $\sles$ is called a sub-linear expectation space. Give a sub-linear expectation $\mbE$, let us denote the conjugate expectation $\mbe$ of $\mbE$ by
$$
\mbe[X]:=-\mbE[-X],\quad\forall X\in\mathcal{H}.
$$
\end{define}
\par From the definition, it is easily shown that $\mbe[X]\leq\mbE[X],\enspace \mbE[X+c]=\mbE[X]+c$, and $\mbE[X-Y]\geq\mbE[X]-\mbE[Y]$ for all $X,Y\in\mathcal{H}$ with $\mbE[Y]$ being finite. We also call $\mbE[X]$ and $\mbe[X]$ the upper-expectation and lower-expectation of $X$, respectively.

\begin{define}
\begin{itemize}
\item[(i)] (Identical distribution) Let $\bm{X}_1$ and $\bm{X}_2$ be two n-dimensional random vectors, respectively, defined in sub-linear expectation spaces $(\Omega_1,\mathcal{H}_1,\mbE_1)$ and $(\Omega_2,\mathcal{H}_2,\mbE_2)$. They are called identically distributed, denoted by $\bm{X}_1\overset{d}{=}\bm{X}_2$, if
$$
	\mbE_1[\varphi(\bm{X}_1)]=\mbE_2[\varphi(\bm{X}_2)],\quad\forall\varphi\in C_{b,Lip}(\mathbb{R}^n).
$$
 A sequence $\{X_n;n\geq 1\}$ of random variables is said to be identically distributed if $X_i\overset{d}{=}X_1$ for each $i\geq 1$.
\item[(ii)] (Independence) In a sub-linear expectation space $\sles$, a random vector $\bm{Y}=(Y_1,\cdots,Y_n),Y_i\in\mathcal{H}$ is said to be independent of another random vector $\bm{X}=(X_1,\cdots,X_m),X_i\in\mathcal{H}$ under $\mbE$ if for each test function $\varphi\in C_{b,Lip}(\mathbb{R}^m\times\mathbb{R}^n)$ we have $\mbE[\varphi(\bm{X},\bm{Y})]=\mbE[\mbE[\varphi(\bm{x},\bm{Y})]\vert_{\bm{x}=\bm{X}}]$.
\item[(iii)] (Independent random variables) A sequence of random variables $\{X_n;n\geq 1\}$ is said to be independent if $X_{i+1}$ is independent of $(X_1,\cdots,X_i)$ for each $i\geq 1$.
\item[(iv)](m-dependence) A sequence of random variables(or random vectors) $\{X_n;n\geq 1\}$ is said to be $m$-dependent if there exists an integer $m$ such that for every $n$ and every $j\geq m+1$, $(X_{n+m+1},\cdots,X_{n+j})$ is independent of $(X_1,\cdots,X_n)$. In particular, if $m=0$, $\{X_n;n\geq 1\}$ is an independent sequence.
\item[(v)](stationary) A sequence of random variables(or random vectors) $\{X_n;n\geq 1\}$ is said to be stationary if for every positive integers $n$ and $p$, $(X_1,\cdots,X_n)\overset{d}{=}(X_{1+p},\cdots,X_{n+p})$.
\item[(vi)](linear stationary) A sequence of random variables(or random vectors) $\{X_n;n\geq 1\}$ is said to be linear stationary if for every positive integers $n$ and $p$, $X_1+\cdots+X_n\overset{d}{=}X_{1+p}+\cdots+X_{n+p}$.
\end{itemize}
\end{define}
It is easily seen that if $\{X_1,\cdots,X_n\}$ are independent and bounded random variables, then $\mbE[\sum_{i=1}^nX_i]=\sum_{i=1}^n\mbE[X_i]$.
\par Next, we consider the capacities corresponding to the sub-linear expectations. Let $\mathcal{G}\subset\mathcal{F}$. A function $V:\mathcal{G}\rightarrow[0,1]$ is called a capacity if
$$
V(\emptyset)=0,\enspace V(\Omega)=1 \enspace and\enspace  V(A)\leq V(B)\enspace \forall A\subset B, A,B\in\mathcal{G}.
$$
It is called sub-additive if $V(A\cup B)\leq V(A)+V(B)$ for all $A,B\in\mathcal{G}$ with $A\cup B\in\mathcal{G}$.

\par Let $\sles$ be a sub-linear expectation space. We define $(\V,\mv)$ as a pair of capacities with the properties that
\begin{equation}
	\mbE[f]\leq\V(A)\leq\mbE[g]\quad if\enspace f\leq I_A\leq g,f,g\in\mathcal{H}\enspace and\enspace A\in\mathcal{F},\label{eq1.1}
\end{equation}
$\V$ is sub-additive and $\mv(A):=1-\V(A^c),A\in\mathcal{F}$.
We call $\V$ and $\mv$ the upper and lower capacity, respectively.
\par Also, we define the Choquet integrals/expectations $(C_{\V},C_{\mv})$ by
\begin{equation}
	C_V[X]=\int_0^{\infty}V(X\geq t)\dif t+\int_{-\infty}^0[V(X\geq t)-1]\dif t
\end{equation}
with $V$ being replaced by $\V$ and $\mv$, respectively.  If $\V$ on the sub-linear expectation space $\sles$ and $\widetilde{\mathbb{V}}$ on the sub-linear expectation space $(\widetilde{\Omega},\widetilde{\mathcal{H}},\widetilde{\mathbb{E}})$ are two capacities have the property \eqref{eq1.1}, then for any random variables $X\in\mathcal{H}$ and $\widetilde{X}\in\widetilde{\mathcal{H}}$ with $X\overset{d}{=}\widetilde{X}$, we have
\begin{equation}
	\V(X\geq x+\epsilon)\leq\widetilde{V}(\widetilde{X}\geq x)\leq\V(X\geq x-\epsilon)\quad for\enspace all\enspace\epsilon>0\enspace and \enspace x\label{eq1.2}
\end{equation}
and
\begin{equation}
C_{\V}[X]=C_{\widetilde{\V}}[X].
\end{equation}
\par In general, we choose $(\V,\mv)$ as
\begin{equation}
\hat{\V}(A):=\inf\{\mbE[\xi]:I_A\leq\xi,\xi\in\mathcal{H}\},\widehat{\mv}(A)=1-\widehat{\V}(A^c), \enspace\forall A\in\mathcal{F}.
\end{equation}
\par Since $\widehat{\V}$ may be not countably sub-additive so that the Borel-Cantelli lemma is not valid, we consider its countably sub-additive extension $\Vstar$ which defined by
\begin{equation}
\Vstar(A):=\inf\left\{\sum_{n=1}^{\infty}\widehat{\V}(A_n):A\subset\bigcup_{n=1}^{\infty}A_n\right\},\mvstar(A)=1-\Vstar(A^c),\quad A\in\mathcal{F}.
\end{equation}
As shown in Zhang\cite{zhang16}, $\Vstar$ is countably sub-additive, and $\Vstar(A)\leq\widehat{\V}(A)$. Further, $\widehat{\V}(A)$(resp. $\Vstar$) is the largest sub-additive(resp. countably sub-additive) capacity in sense that if $V$ is also a sub-additive(resp. countably sub-additive) capacity satisfying $V(A)\leq\mbE[g]$ whenenver $I_A\leq g\in\mathcal{H}$, then $V(A)\leq\widehat{\V}(A)$(resp. $V(A)\leq\Vstar(A)$).

\par Finally, we introduce the condition (CC) proposed in Zhang\cite{zhang21}. We say that the sub-linear expectation $\mbE$ satisfies the condition (CC) if
$$
		\mbE[X]=\sup_{P\in\mathcal{P}}P[X],\enspace X\in\mathcal{H}_b,
$$
	where $\mathcal{H}_b=\{f\in\mathcal{H}:f\enspace is\enspace bounded\}$, $\mathcal{P}$ is a countable-dimensionally weakly compact family of probability measures on $(\Omega,\sigma(\mathcal{H}))$ in sense that, for any $Y_1, Y_2,\cdots\in\mathcal{H}_b$ and any sequence $\{P_n\}\subset\mathcal{P}$, there is a subsequence $\{n_k\}$ and a probability measure $P\in\mathcal{P}$ for which
$$
	\lim_{k\rightarrow\infty}P_{n_k}[\varphi(Y_1,\cdots,Y_d)]=P[\varphi(Y_1,\cdots,Y_d)],\enspace \varphi\in C_{b,Lip}(\mathbb{R}^d),d\geq1.
$$
We denote
\begin{equation}
	\V^{\mathcal{P}}(A)=\sup_{P\in\mathcal{P}}P(A),\enspace A\in\sigma(\mathcal{H}),
\end{equation}
and it is obvious that $\V^{\mathcal{P}}\leq\widehat{\V}^*\leq \hV$. Let
\begin{align*}
 \mathcal{P}^e=& \left\{P: P \text{ is a probability measure on }\sigma(\mathcal{H}) \right. \\
 & \left.\text{ such that }
P[X]\le \mbE[X] \text{ for all } X\in \mathcal{H}_b \right\}.
\end{align*}
Zhang (2023) has shown the following three statements are equivalent: (i) the condition (CC) is satisfied with some $\mathcal{P}$; (ii) the condition (CC) is satisfied with  $\mathcal{P}^e$; (iii) $\mbE$ is regular in the sense that $\mbE[X_n]\searrow 0$ whenever $\mathcal{H}_b\ni X_n\searrow 0$. It shall be mentioned that  $\sup_{P\in\mathcal{P}_1}P[X] =\sup_{P\in\mathcal{P}_2}P[X]\; \forall  X\in\mathcal{H}_b$ does not imply $\sup_{P\in\mathcal{P}_1}P(A) =\sup_{P\in\mathcal{P}_2}P(A)\; \forall  A\in\sigma(\mathcal{H})$.

\par Through this paper, for real numbers $x$ and $y$, we denote $x\vee y=\max\{x,y\},x\wedge y=\min\{x,y\},x^+=x\vee 0$ and $x^-=x\wedge 0$. For a random variable $X$, because $XI\{\vert X\vert\leq c\}$ may not be in $\mathcal{H}$, we will truncate it in the form $(-c)\vee X\wedge c$ denoted by $X^{(c)}$. We define $\bE[X]=\lim_{c\rightarrow\infty}\mbE[X^{(c)}]$ if the limit exists, and $\be[X]=-\bE[-X]$. It is obvious that, if $C_{\widehat{\V}}(|X|)<\infty$, then
$\bE[X]$, $\be[X]$ and $\bE[|X|]$, and $ \bE[X], \be[X]\le \bE[|X|]\le C_{\widehat{\V}}(|X|)$.

\section{Main results}\label{sec:result}
Our first theorem shows a strong limit theorem for a sequence of independent, but not necessarily identically distributed random variables under the sub-linear expectation.
\begin{theorem}\label{th1}
	Let $\{X_n;n\geq 1\}$ be a sequence of independent random variables in the sub-linear expectation space $\sles$. Suppose   $\{a_n\}$ is a sequence such that $1\leq a_n\nearrow\infty$ and
\begin{equation}
	\sum_{n=1}^{\infty}\frac{\mbE[X_n^2]}{a_n^2}<\infty.\label{eqth1.1}
\end{equation}
Denote $S_n=\sum_{i=1}^nX_i$. Then
$$
	\Vstar\left(\limsup_{n\rightarrow\infty}\frac{S_n-\mbE[S_n]}{a_n}>0\enspace or\enspace\liminf_{n\rightarrow\infty}\frac{S_n-\mbe[S_n]}{a_n}<0\right)=0.
$$
\end{theorem}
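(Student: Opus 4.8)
The plan is to first reduce the two–sided statement to a one–sided one by symmetry, then to prove the one–sided bound by truncation, a three–term decomposition, and a dyadic–block Borel--Cantelli argument. For the symmetry step I would put $X_n'=-X_n$, $S_n'=-S_n$, so that $\mbe[S_n]=-\mbE[S_n']$ and $S_n'-\mbE[S_n']=\mbe[S_n]-S_n$, while $\{X_n'\}$ is again independent with $\mbE[(X_n')^2]=\mbE[X_n^2]$, so \eqref{eqth1.1} is preserved. Hence the event $\{\liminf_n (S_n-\mbe[S_n])/a_n<0\}$ for $\{X_n\}$ equals $\{\limsup_n (S_n'-\mbE[S_n'])/a_n>0\}$ for $\{X_n'\}$, and by sub-additivity of $\Vstar$ it suffices to prove
\[
\Vstar\left(\limsup_{n\to\infty}\frac{S_n-\mbE[S_n]}{a_n}>0\right)=0.
\]
Since $\{\limsup_n (S_n-\mbE[S_n])/a_n>0\}\subset\bigcup_{m\ge1}\{(S_n-\mbE[S_n])/a_n>1/m\ \text{i.o.}\}$, countable sub-additivity of $\Vstar$ reduces the task to showing, for each fixed $\varepsilon>0$, that $\Vstar((S_n-\mbE[S_n])/a_n>\varepsilon\ \text{i.o.})=0$.

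Next I would truncate. Set $Y_i=X_i^{(a_i)}$, $T_k=\sum_{i\le k}Y_i$, and decompose
\[
S_k-\mbE[S_k]=(S_k-T_k)+(T_k-\mbE[T_k])+(\mbE[T_k]-\mbE[S_k]).
\]
Because the $Y_i$ are bounded and independent, the additivity of $\mbE$ for bounded independent summands recorded in Section \ref{sec:setting} gives $\mbE[T_k]=\sum_{i\le k}\mbE[Y_i]$; this is the device that realigns the centering $\mbE[S_k]$ with a sum of term-wise means and compensates for the fact that $\mbE[S_k]\ne\sum_{i\le k}\mbE[X_i]$ in general. For the truncation error, a Markov inequality for $\hV$ yields $\Vstar(|X_i|>a_i)\le\mbE[X_i^2]/a_i^2$, so $\sum_i\Vstar(|X_i|>a_i)<\infty$ by \eqref{eqth1.1}; Borel--Cantelli for $\Vstar$ then gives that, outside a $\Vstar$-null set, $X_i=Y_i$ for all large $i$, whence $(S_k-T_k)/a_k\to0$. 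For the centering difference, sub-additivity gives $|\mbE[T_k]-\mbE[S_k]|\le\sum_{i\le k}\mbE[|X_i-Y_i|]$ with $\mbE[|X_i-Y_i|]\le\mbE[X_i^2]/a_i$, so $\sum_i\mbE[|X_i-Y_i|]/a_i<\infty$ and Kronecker's lemma (using $a_k\nearrow\infty$) gives $(\mbE[T_k]-\mbE[S_k])/a_k\to0$ deterministically.

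Everything then reduces to the main term $W_k:=T_k-\mbE[T_k]=\sum_{i\le k}Z_i$, where $Z_i:=Y_i-\mbE[Y_i]$ are independent, bounded and $\mbE$-centered with $\mbE[Z_i^2]\le4\mbE[X_i^2]$; I must show $\Vstar(W_k/a_k>\varepsilon\ \text{i.o.})=0$. I would pick a block boundary $n_j$ with $a_{n_j}\asymp2^j$ and set $A_j=\{\max_{n_{j-1}<k\le n_j}W_k>\varepsilon a_{n_{j-1}}\}$; since $a_k\ge a_{n_{j-1}}$ inside block $j$, the event $\{W_k/a_k>\varepsilon\ \text{for some}\ k\in(n_{j-1},n_j]\}$ lies in $A_j$, and $\{W_k/a_k>\varepsilon\ \text{i.o.}\}\subset\{A_j\ \text{i.o.}\}$. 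Splitting $\max_{n_{j-1}<k\le n_j}W_k\le W_{n_{j-1}}+\max_{n_{j-1}<k\le n_j}\sum_{n_{j-1}<i\le k}Z_i$ and applying a Chebyshev bound to $W_{n_{j-1}}$ together with the Kolmogorov-type maximal inequality of Section \ref{sec:inequality} to the within-block sum, I obtain $\hV(A_j)\le (C/\varepsilon^2)\,a_{n_{j-1}}^{-2}\sum_{i\le n_j}\mbE[Z_i^2]$; interchanging the order of summation and using $a_{n_j}\asymp2^j$ gives $\sum_j\hV(A_j)\le(C'/\varepsilon^2)\sum_i\mbE[X_i^2]/a_i^2<\infty$. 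Since $\Vstar\le\hV$, Borel--Cantelli for $\Vstar$ yields $\Vstar(A_j\ \text{i.o.})=0$, which completes the argument.

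The main obstacle I anticipate is precisely the interplay of the two non-classical features: the sub-additivity of $\mbE$ (so the natural centering $\mbE[S_k]$ is not a sum of term-wise means) and the possible discontinuity of $\hV$ (so a maximal inequality cannot be pushed to the infinite-horizon supremum by a monotone limit). Both are circumvented above, the first by truncating to bounded variables to recover exact additivity, and the second by replacing the tail-supremum argument with a dyadic-block decomposition whose block capacities are summable, so that only the countably sub-additive $\Vstar$ and Borel--Cantelli are needed. The one delicate point to verify is the precise form and constant of the sub-linear Kolmogorov maximal inequality for $\mbE$-centered bounded independent summands, since the cross-block coupling carried by $W_{n_{j-1}}$ must be absorbed cleanly into the Chebyshev term.
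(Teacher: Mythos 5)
The decisive step of your proposal --- ``pick a block boundary $n_j$ with $a_{n_j}\asymp 2^j$'' --- is not available for a general sequence $1\le a_n\nearrow\infty$, and this is a genuine gap, not a technicality. Your interchange of summation needs two-sided control, $a_i\le C a_{n_{j-1}}$ for all $n_{j-1}<i\le n_j$, i.e.\ $\sup_j a_{n_j}/a_{n_{j-1}}<\infty$; but when the ratios $a_{n+1}/a_n$ are unbounded (the theorem permits, say, $a_n=2^{2^n}$) no choice of boundaries can achieve this, since $a_{n_j}/a_{n_{j-1}}\ge a_{n_{j-1}+1}/a_{n_{j-1}}$. Worse, the failure is not only in the estimates but in the event inclusion itself: take classical symmetric two-point variables $X_n=\pm a_n/n$ with $a_n=2^{2^n}$, so that \eqref{eqth1.1} holds and in fact $S_k/a_k\to 0$ surely; then for \emph{any} choice of blocks, with probability at least $1/2$ the single summand $X_{n_{j-1}+1}=+a_{n_{j-1}}^2/(n_{j-1}+1)$ already pushes $W_{n_{j-1}+1}$ above $\varepsilon a_{n_{j-1}}$, so $P(A_j)\ge 1/2$ for large $j$ and $P(A_j\;\text{i.o.})\ge 1/2$. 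Thus $\{A_j\;\text{i.o.}\}$ is not a null event, and no Borel--Cantelli argument applied to these block-start-calibrated events can close the proof. This is exactly the difficulty the paper's proof is engineered around: Wittmann's Lemma \ref{wit} deliberately gives only $\lambda a_{n_k}\le a_{n_{k+1}}\le\lambda^3 a_{n_k+1}$, where the upper comparison is with $a_{n_k+1}$, the first index \emph{inside} the block, never with $a_{n_k}$; the block events are calibrated to the block \emph{end}, with thresholds $\epsilon_{n_k}a_{n_{k+1}}$ whose extra factor $\epsilon_{n_k}\searrow 0$ (obtained by strengthening \eqref{eqth1.1} to $\sum_n\mbE[X_n^2]/(\epsilon_n^2a_n^2)<\infty$) makes the block capacities summable no matter how $a_n$ jumps; and the pointwise bound on $(S_n-\mbE[S_n])/a_n$ is then recovered by chaining the increments $S_{n_i}-S_{n_{i-1}}-\mbE[S_{n_i}-S_{n_{i-1}}]$ across all previous blocks, summing a geometric series, and using $a_{n_{k+1}}\le\lambda^3 a_{n_k+1}\le\lambda^3 a_n$ only at the very end. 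As written, your argument proves Theorem \ref{th1} under the additional hypothesis $\sup_n a_{n+1}/a_n<\infty$ (which happens to be all that the application in Theorem \ref{th3} requires), but not as stated.

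Two further remarks. First, the ``Chebyshev bound'' on $W_{n_{j-1}}$ cannot be run through $\mbE[W_{n_{j-1}}^2]\le\sum_{i\le n_{j-1}}\mbE[Z_i^2]$: under a sub-linear expectation the cross terms do not vanish for summands that are only upper-centered ($\mbE[Z_i]=0$ but possibly $\mbe[Z_i]<0$), so the variance of an independent sum is not dominated by the sum of the variances; you must instead invoke the maximal inequality \eqref{kolV} of Lemma \ref{Kol} over the whole range $k\le n_j$ (which also makes your split at $n_{j-1}$ unnecessary) --- you flagged this point yourself, and it is fixable. Second, the remaining ingredients are sound: the symmetry reduction, the reduction to fixed $\varepsilon$ by countable sub-additivity of $\Vstar$, and especially the truncation $Y_i=X_i^{(a_i)}$, which restores exact additivity $\mbE[T_k]=\sum_{i\le k}\mbE[Y_i]$ for bounded independent summands and whose error terms you control correctly; this truncation is a clean alternative to the paper's implicit use of additivity of $\mbE$ for unbounded independent sums.
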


Under the condition (CC), the limit theorems for some probability measure $P\in\mathcal{P}$ can be obtained.
\begin{theorem}\label{th2}
	Let $\{X_n\}$ and $\{a_n\}$ satisfy the same conditions as in Theorem \ref{th1} and we further assume that $\mbE$ satisfies the condition (CC). Then the following two conclusions hold.
\begin{itemize}
	\item[(i)] There exists $P\in\mathcal{P}$ such that
		\begin{equation}
			P\left(\limsup_{n\rightarrow\infty}\frac{S_n-\mbE[S_n]}{a_n}=0\enspace and\enspace\liminf_{n\rightarrow\infty}\frac{S_n-\mbe[S_n]}{a_n}=0\right)=1.\label{eqth2.1}
	\end{equation}
	\item[(ii)]	For any sequence $\{\mu_n\}$ with $\mbe[X_n]\leq\mu_n\leq\mbE[X_n]$, there exists $P\in\mathcal{P}$ such that
	\begin{equation}
			P\left(\lim_{n\rightarrow\infty}\frac{\sum_{i=1}^n(X_i-\mu_i)}{a_n}=0\right)=1.\label{eqth2.2}
	\end{equation}
\end{itemize}
\end{theorem}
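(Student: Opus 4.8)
The plan is to use Theorem \ref{th1} to obtain the two one-sided bounds that hold simultaneously for every $P\in\mathcal{P}$, and then to manufacture, by a product-measure construction adapted to the $\mbE$-independence, a single measure under which the reverse bounds also hold, so that the limsup and liminf collapse to $0$. First I would record the ``every $P$'' direction. By Theorem \ref{th1} the event $B=\{\limsup_{n}(S_n-\mbE[S_n])/a_n>0\ \text{or}\ \liminf_{n}(S_n-\mbe[S_n])/a_n<0\}$ has $\Vstar(B)=0$. Since (CC) gives $\V^{\mathcal{P}}\le\Vstar$ and $P(B)\le\V^{\mathcal{P}}(B)$ for every $P\in\mathcal{P}$, it follows that for every $P\in\mathcal{P}$, $P$-a.s., $\limsup_{n}(S_n-\mbE[S_n])/a_n\le 0$ and $\liminf_{n}(S_n-\mbe[S_n])/a_n\ge 0$. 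It then remains only to produce a $P$ attaining equality, which is where weak compactness and independence enter.

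The engine is the following construction, which also proves (ii) directly. Given any $\mu_i\in[\mbe[X_i],\mbE[X_i]]$, I would first, for each $i$, choose a representing marginal $P_i$ (a probability measure dominated by $\mbE$ on bounded functions of $X_i$) with $E_{P_i}[X_i]=\mu_i$ and $E_{P_i}[X_i^2]\le\mbE[X_i^2]$; such a $P_i$ exists because $\{E_{Q}[X_i]\colon Q\ \text{dominated}\}$ is a convex set whose closure is $[\mbe[X_i],\mbE[X_i]]$, the endpoints being attained through weak compactness (after truncation, using $\mbE[X_i^2]<\infty$ for uniform integrability). Using the $\mbE$-independence of $\{X_i\}$ I would then glue the $P_i$ into the product measure $P=\bigotimes_i P_i$ and check, by iterating the independence identity inward over bounded Lipschitz cylinder functions, that $E_P[\varphi]\le\mbE[\varphi]$, so that $P\in\mathcal{P}^e$ (which we may take as the representing family by the equivalence recalled above). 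Under this $P$ the variables $X_i$ are independent with $E_P[X_i]=\mu_i$ and $\operatorname{Var}_P(X_i)\le\mbE[X_i^2]$, whence $\sum_i\operatorname{Var}_P(X_i)/a_i^2\le\sum_i\mbE[X_i^2]/a_i^2<\infty$; the classical Kolmogorov convergence theorem together with Kronecker's lemma then gives $a_n^{-1}\sum_{i\le n}(X_i-\mu_i)\to0$ $P$-a.s., which is exactly \eqref{eqth2.2}.

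To deduce (i) I would feed the engine a deterministic, oscillating choice of means. Using additivity of $\mbE$ on independent sums, $\mbE[S_n]=\sum_{i\le n}\mbE[X_i]$ and $\mbe[S_n]=\sum_{i\le n}\mbe[X_i]$ (valid here since $\mbE[X_i^2]<\infty$, via truncation), the gaps $g_n^+=\mbE[S_n]-\sum_{i\le n}\mu_i$ and $g_n^-=\sum_{i\le n}\mu_i-\mbe[S_n]$ are nonnegative, and each increases only during the phases where $\mu_i$ is set to the opposite endpoint. I would schedule long alternating phases (all $\mu_i=\mbE[X_i]$, then all $\mu_i=\mbe[X_i]$, and so on), extending each phase until $a_n$ dominates the currently frozen gap; since $a_n\to\infty$ this forces $\liminf_n g_n^+/a_n=\liminf_n g_n^-/a_n=0$. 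Combining the $P$-a.s. limit $a_n^{-1}\sum_{i\le n}(X_i-\mu_i)\to0$ from the engine with the decompositions $(S_n-\mbE[S_n])/a_n=o(1)-g_n^+/a_n$ and $(S_n-\mbe[S_n])/a_n=o(1)+g_n^-/a_n$ yields $\limsup_n(S_n-\mbE[S_n])/a_n=0$ and $\liminf_n(S_n-\mbe[S_n])/a_n=0$ $P$-a.s., which together with the ``every $P$'' bounds gives \eqref{eqth2.1}.

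The main obstacle is the gluing step: fabricating a genuine product measure that is still dominated by $\mbE$, i.e.\ lies in $\mathcal{P}^e$. This is precisely where the $\mbE$-independence (as opposed to mere independence under a single $P$, which need not hold) is indispensable, and where the nonadditivity of $\mbE$ makes the inward-iteration estimate delicate; a secondary technical point is realizing the prescribed marginal means exactly, including the endpoint and unbounded cases, for which the second-moment hypothesis and weak compactness are used to secure both attainment and uniform integrability, as well as the additivity $\mbE[S_n]=\sum_{i\le n}\mbE[X_i]$ for unbounded independent summands.
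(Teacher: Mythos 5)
Your overall architecture --- the ``every $P$'' bounds from Theorem \ref{th1} via $\V^{\mathcal{P}}\le\Vstar$, then an ``engine'' producing one $P\in\mathcal{P}$ under which the $X_i$ are classically independent with prescribed means $\mu_i$, then (i) deduced from (ii) by an oscillating schedule of means --- is coherent, and the scheduling reduction of (i) to (ii) is a genuinely nice idea that the paper does not use. But the engine's gluing step has a real gap, and it is not merely technical. The object $\bigotimes_i P_i$ is not a candidate member of $\mathcal{P}$ or $\mathcal{P}^e$: those are families of probability measures on $(\Omega,\sigma(\mathcal{H}))$, whereas a product of the marginal laws of the $X_i$ is a measure on the sequence space $\mathbb{R}^{\infty}$, and there is no canonical way to pull it back to $\Omega$ (the map $\omega\mapsto(X_1(\omega),X_2(\omega),\dots)$ need not be surjective, and the product law need not be carried by its range). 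What your ``iterating the independence identity'' check actually proves is the inequality $\int\varphi\,\dif(\mu_1\otimes\cdots\otimes\mu_d)\le\mbE[\varphi(X_1,\dots,X_d)]$ for bounded Lipschitz $\varphi$, i.e.\ domination of the finite-dimensional product \emph{laws}; it does not produce any measure on $\Omega$, let alone one dominated by $\mbE$ on all of $\mathcal{H}_b$ (membership in $\mathcal{P}^e$ requires domination on every bounded element of $\mathcal{H}$, not only on cylinder functions of the $X_i$). To turn the dominated product laws into an actual $P\in\mathcal{P}$ one still needs (a) a Hahn--Banach extension of the product functional from the cylinder subspace to $\mathcal{H}_b$, (b) a Daniell--Stone representation of the extension as a countably additive measure on $\sigma(\mathcal{H})$, which uses the regularity of $\mbE$ (the equivalent formulation of (CC)), and (c) the countable-dimensional weak compactness to pass from finitely many to all coordinates. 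That is precisely the step you flag as ``the main obstacle,'' and it is left unproved; filling it in amounts to re-deriving the machinery the paper has already packaged into Lemma \ref{lemBC2}.

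For comparison, the paper never realizes product measures at all. For (ii) it applies the maximal inequality under the lower capacity (Lemma \ref{lemma2.7}), which is valid for any choice $\mbe[X_i]\le\mu_i\le\mbE[X_i]$, to get summable $\mhV$-bounds over blocks $n_k<n\le n_{k+1}$, and then invokes Lemma \ref{lemBC2} to obtain a single $P\in\mathcal{P}$ under which the bad events occur only finitely often. For (i) it uses \eqref{kolv} to show that the centered block sums $S_{n_{k-1},n_k}$ lie above $-\epsilon_k a_{n_k}$ with capacity summing to infinity along a sparse subsequence with $a_{n_{k+1}}/a_{n_k}\to\infty$, and the converse part of Lemma \ref{lemBC2} (using independence of the blocks) yields one $P$ under which these events occur infinitely often; combined with Theorem \ref{th1}, the limsup and liminf are forced to equal $0$. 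If you want to keep your route, the cleanest repair is to isolate and prove the gluing statement as a lemma along the lines of (a)--(c) above; with it, your treatment of (ii) and your reduction of (i) to (ii) do go through, including the secondary points you raise (attainment of endpoint means by the same compactness argument, and $\mbE[S_n]=\sum_{i\le n}\mbE[X_i]$ from $\mbE[X_i^2]<\infty$ as in Lemma \ref{lemma3.7}).
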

With the previous preparations, now we focus on $m$-dependent case. The following theorem is the law of large numbers for $m$-dependent and stationary random variables.
\begin{theorem}\label{th3}
	Let $\{X_n;n\geq 1\}$ be a sequence of $m$-dependent and linear stationary random variables in the sub-linear expectation space $\sles$ with
\begin{equation}\label{eqth3.1}
	C_{\hV}(|X_1|)<\infty.
\end{equation}
Denote $S_n=\sum_{i=1}^n X_i$.
Then there exist real numbers $\lm,\um$ such that $\be[X_1]\leq\lm\leq\um\leq\bE[X_1]$, and
\begin{equation}
	 \um=\lim_{n\rightarrow\infty}\frac{\bE[S_n]}{n},\enspace\lm=\lim_{n\rightarrow\infty}\frac{\be[S_n]}{n},\quad\quad\label{eqth3.2}
\end{equation}
\begin{equation}	 \Vstar\left(\limsup_{n\rightarrow\infty}\frac{S_n}{n}>\um\enspace or\enspace\liminf_{n\rightarrow\infty}\frac{S_n}{n}<\lm\right)=0.\label{eqth3.3}
\end{equation}
Further, suppose $\mbE$ satisfies the condition (CC), then there exists $P\in\mathcal{P}$ such that
\begin{equation}
	P\left(\limsup_{n\rightarrow\infty}\frac{S_n}{n}=\um\enspace and\enspace\liminf_{n\rightarrow\infty}\frac{S_n}{n}=\lm\right)=1,
\label{eqth3.4}
\end{equation}
 for any sequence $\{\mu_j;j\ge 1\}$ with $\lm\leq\mu_j\leq\um$, there exists $P\in\mathcal{P}$ such that
\begin{equation}
	P\left(\lim_{n\rightarrow\infty}\frac{S_n-\sum_{j=1}^n \mu_j}{n}=0\right)=1,
\label{eqth3.5}
\end{equation}
for any $a,b$ with $\lm\leq a\leq b\leq\um$, there exists $P\in\mathcal{P}$ such that
\begin{equation}
	P\left(C\Big\{\frac{S_n}{n}\Big\}=[a,b]\right)=1,
\label{eqth3.6}
\end{equation}
where $C\{x_n\}$ denotes the cluster set of a sequence of $\{x_n\}$ in $\mathbb R$.
\end{theorem}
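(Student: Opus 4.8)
\emph{Strategy and two standing facts.} The plan is to transport the problem to the independent setting of Theorems \ref{th1} and \ref{th2} by Lin Zhengyan's device of cutting $S_n$ into large and small blocks, and then to let the block length tend to infinity to recover the sharp constants. Two elementary consequences of the hypotheses are used repeatedly: putting $n=1$ in the definition of linear stationarity gives $X_i\overset{d}{=}X_1$ for every $i$, so $C_{\hV}(|X_i|)=C_{\hV}(|X_1|)<\infty$; and the increments satisfy $S_{n+k}-S_n=X_{n+1}+\cdots+X_{n+k}\overset{d}{=}S_k$.

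\emph{Existence of the constants.} First I would establish \eqref{eqth3.2}. Because $\bE$ depends only on the distribution and is sub-additive while $\be=-\bE[-\,\cdot\,]$ is super-additive, the increment identity yields $\bE[S_{n+k}]\le\bE[S_n]+\bE[S_k]$ and $\be[S_{n+k}]\ge\be[S_n]+\be[S_k]$. Fekete's subadditive lemma then gives $\um=\lim_n \bE[S_n]/n=\inf_n\bE[S_n]/n$ and $\lm=\lim_n\be[S_n]/n=\sup_n\be[S_n]/n$, both finite since $\bE[|S_n|]\le nC_{\hV}(|X_1|)<\infty$. Taking $n=1$ gives $\um\le\bE[X_1]$ and $\lm\ge\be[X_1]$, while $\be[S_n]\le\bE[S_n]$ gives $\lm\le\um$.

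\emph{Blocking and reduction to the i.i.d. case.} Fix $p\ge m$ and split $\{1,2,\dots\}$ into alternating big blocks of length $p$ and small blocks of length $m$; let $Y_k$ and $Z_k$ be the sums over the $k$-th big and small block. By $m$-dependence each future block is independent of the past, so $\{Y_k\}$ (and, using $p\ge m$, $\{Z_k\}$) is an independent sequence, and by linear stationarity $Y_k\overset{d}{=}S_p$, $Z_k\overset{d}{=}S_m$; thus $\{Y_k\}$ is i.i.d.\ with $C_{\hV}(|Y_1|)=C_{\hV}(|S_p|)\le p^2C_{\hV}(|X_1|)<\infty$ (bounding $\hV(|S_p|>t)$ by $p\,\hV(|X_1|>t/p)$). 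For the upper bound \eqref{eqth3.3} I truncate $\widehat Y_k=Y_k^{(k)}$: the tails are negligible because $\sum_k\Vstar(Y_k\ne\widehat Y_k)\le\sum_k\hV(|Y_1|>k)\le C_{\hV}(|Y_1|)<\infty$ and $\Vstar$ is countably sub-additive, so Borel–Cantelli applies; the truncated variables are bounded and independent, hence $\mbE$ is additive over them, and $\sum_k\mbE[\widehat Y_k^2]/k^2<\infty$ by the moment inequalities of Section \ref{sec:inequality}. Theorem \ref{th1} with $a_k=k$, together with $\mbE[\widehat Y_k]=\mbE[Y_1^{(k)}]\to\bE[S_p]$ and Cesàro averaging, then gives $\limsup_K\frac1K\sum_{k\le K}Y_k\le\bE[S_p]$ and $\liminf_K\ge\be[S_p]$, $\Vstar$-a.s. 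The small blocks are controlled by the same argument applied to $|Z_k|$, yielding $\limsup_K\frac1K\sum_{k\le K}|Z_k|\le\bE[|S_m|]$, and the unfinished block is $o(n)$ since $|Y_k|/k\to0$. As $n\sim K(p+m)$, these combine to
\[
\limsup_n\frac{S_n}{n}\le\frac{\bE[S_p]+\bE[|S_m|]}{p+m},\qquad \liminf_n\frac{S_n}{n}\ge\frac{\be[S_p]-\bE[|S_m|]}{p+m}\quad(\Vstar\text{-a.s.}).
\]
Taking the countable union of the exceptional sets over $p$ and letting $p\to\infty$ (so the right-hand sides tend to $\um$ and $\lm$) yields \eqref{eqth3.3}.

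\emph{Achievement under (CC) and the main obstacle.} Since $P\le\Vstar$ for every $P\in\mathcal P$, \eqref{eqth3.3} already forces $\limsup S_n/n\le\um$ and $\liminf S_n/n\ge\lm$ $P$-a.s. To realize equality I would apply Theorem \ref{th2} to the independent blocks: for prescribed targets $\nu_k\in[\be[S_p],\bE[S_p]]$ it produces $P$ with $\frac1K\sum_{k\le K}(Y_k-\nu_k)\to0$ $P$-a.s., so that $S_n/n$ follows $\frac{1}{p+m}\cdot\frac1K\sum_{k\le K}\nu_k$; steering the schedule $\{\nu_k\}$ to oscillate densely across $[\be[S_p],\bE[S_p]]$ makes the cluster set equal to $\frac{1}{p+m}[\be[S_p],\bE[S_p]]$. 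Statements \eqref{eqth3.4} and \eqref{eqth3.5} are the special cases of a full, respectively constant, target, and \eqref{eqth3.6} is the general interval version. The hard part is that a fixed $p$ only reaches the proper subinterval $\frac{1}{p+m}[\be[S_p],\bE[S_p]]$ of $[\lm,\um]$, so hitting the exact endpoints $\lm,\um$ (and a prescribed $[a,b]$) forces $p\to\infty$, whereas \eqref{eqth3.4}–\eqref{eqth3.6} demand one single $P$. I expect to resolve this by a multi-scale construction that uses an increasing sequence of block lengths $p_j\to\infty$ on successive epochs, applies Theorem \ref{th2}(ii) to prescribe the block averages within each epoch, and invokes the weak compactness built into condition (CC) to pass to a single limiting measure in $\mathcal P$; the delicate verification is that this $P$ simultaneously retains the a.s.\ upper bound from \eqref{eqth3.3} and produces the required oscillation filling $[a,b]$.
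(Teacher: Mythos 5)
Your treatment of the first half of the theorem is essentially correct but follows a genuinely different route from the paper's. For \eqref{eqth3.2} you invoke Fekete's lemma on $n\mapsto\bE[S_n]$, using sub-additivity of $\bE$ on $\mathcal{H}_1$ (Lemma \ref{lemma3.6}), the fact that $\bE$ is distribution-determined, and $S_{n+k}-S_n\overset{d}{=}S_k$; this is cleaner than the paper's argument, which first extracts a convergent subsequence $\bE[S_{k_n}]/k_n$ and then upgrades it to a full limit via an $m$-dependent blocking and the additivity of $\bE$ over independent summands (Lemma \ref{lemma3.7}) --- notably your route does not even use $m$-dependence for this step. For \eqref{eqth3.3} you use big blocks of \emph{fixed} length $p$ separated by small blocks of length $m$, apply Theorem \ref{th1} after truncation for each fixed $p$, and then take a countable union of exceptional sets as $p\to\infty$; since $\Vstar$ is countably sub-additive this is sound, whereas the paper instead runs a single blocking scheme with block lengths $l_n\nearrow\infty$ so that one application of Theorem \ref{th1} suffices. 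Up to routine verifications, \eqref{eqth3.2} and \eqref{eqth3.3} are fine.

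The genuine gap is in the (CC) part, \eqref{eqth3.4}--\eqref{eqth3.6}, and it is exactly the obstacle you flag and defer. A fixed block length $p$ only yields, for each $p$, a measure $P_p\in\mathcal{P}$ whose limit behaviour fills the proper subinterval $\frac{1}{p+m}[\be[S_p],\bE[S_p]]$, and your proposed repair --- run epochs with lengths $p_j\to\infty$, apply Theorem \ref{th2} within each epoch, and use the weak compactness in (CC) ``to pass to a single limiting measure'' --- fails as stated. The events in \eqref{eqth3.4}--\eqref{eqth3.6} are tail events; the compactness in (CC) only controls limits of expectations $P_{n_k}[\varphi(Y_1,\dots,Y_d)]$ for bounded Lipschitz $\varphi$ of finitely many random variables, and a weak limit of measures each satisfying an almost sure statement need not satisfy that statement. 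This is precisely why the paper needs the delicate diagonal construction of Lemma \ref{lemBC2} to produce one $P$ supporting countably many i.o.\ events. The paper's resolution of your obstacle is structural rather than topological: the block lengths grow \emph{inside a single blocking scheme}, so that the resulting single independent sequence $\{Z_n\}$ already satisfies $\mbE[Z_i]/l_i\to\um$ and $\mbe[Z_i]/l_i\to\lm$; then Theorem \ref{th2}(i) (for \eqref{eqth3.4}), or Theorem \ref{th2}(ii) with the centers $x_i=\mbe[Z_i]\vee\bigl(\sum_{j=a_{i-1}+1}^{a_i}\mu_j\bigr)\wedge\mbE[Z_i]$ (for \eqref{eqth3.5}), is applied exactly once and produces a single $P$ directly; \eqref{eqth3.6} then reduces to \eqref{eqth3.5} by choosing a deterministic sequence $\mu_i\in[a,b]$ whose Ces\`aro means have cluster set $[a,b]$. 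Your epoch idea can be salvaged only if you reorganize it so that Theorem \ref{th2} is applied a single time to the union of all blocks over all epochs --- which is then just the paper's growing-block scheme; as written, the passage to a limiting measure is the step that would fail.
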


\begin{remark}
	 \eqref{eqth3.4}  and  \eqref{eqth3.5}  imply  that for $\V=\V^{\mathcal{P}}$, $\Vstar$ or $\widehat{\V}$,
$$
	\V\left(\limsup_{n\rightarrow\infty}\frac{S_n}{n}=\um\enspace and\enspace\liminf_{n\rightarrow\infty}\frac{S_n}{n}=\lm\right)=1,
$$
and for any $\lm\leq\mu\leq\um$,
$$
	\V\left(\lim_{n\rightarrow\infty}\frac{S_n}{n}=\mu\right)=1.
$$
\end{remark}

The next theorem shows that the condition \eqref{eqth3.1} is a necessary condition of the strong law of large numbers.
\begin{theorem}\label{th4}
	Let $\{X_n;n\geq 1\}$ be a sequence of $m$-dependent and identically distributed  random variables in the sub-linear expectation space $\sles$  satisfying the condition (CC). If
\begin{equation}\label{eqth4.1}
	C_{\hV}(|X_1|)=\infty,
\end{equation}
 then exists a probability measure $P\in\mathcal{P}$ such that
 \begin{equation} \label{eqth4.2}
 \Vstar\left(\limsup_{n\rightarrow\infty}\frac{|S_n|}{n}=\infty\right)= P\left(\limsup_{n\rightarrow\infty}\frac{|S_n|}{n}=\infty\right)=1.
 \end{equation}

\end{theorem}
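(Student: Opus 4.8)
The plan is to reduce the claim to a statement about the individual summands and then run a second Borel--Cantelli argument at the level of the upper capacity. Since $X_{n}=S_{n}-S_{n-1}$ gives $|X_{n}|/n\le |S_{n}|/n+|S_{n-1}|/(n-1)$, one has $\{\limsup_n|S_n|/n=\infty\}\supseteq\{\limsup_n|X_n|/n=\infty\}$, so it suffices to prove the latter. I would thin to the subsequence $n_k=k(m+1)$; by $m$-dependence the variables $\{X_{n_k}\}$ form an independent sequence. From \eqref{eqth4.1} and the formula $C_{\hV}(|X_1|)=\int_0^\infty\hV(|X_1|\ge t)\dif t$, together with the identical distribution and the monotonicity of $t\mapsto\hV(|X_1|\ge t)$, one deduces $\sum_k\hV(|X_1|\ge M n_k)=\infty$ for every $M\ge1$.

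The engine is the multiplicativity of $\mbE$ on independent nonnegative factors. For bounded $0\le Y_k=f_k(X_{n_k})\le1$ the sequential independence yields $\mbE[\prod_k Y_k]=\prod_k\mbE[Y_k]$ and $\mbe[\prod_k(1-Y_k)]=\prod_k(1-\mbE[Y_k])$, hence $\mbE[\max_{K\le k\le N}Y_k]=1-\prod_{K\le k\le N}(1-\mbE[Y_k])$. Choosing $f_k$ just below $I_{\{|x|\ge M n_k\}}$ and letting $N\to\infty$ forces this to $1$, because the relevant tail sum diverges. To avoid intersecting over all $M$, I would let the scale grow with the block: pick disjoint index blocks $J_1,J_2,\dots$ and set $F_j=\bigcup_{k\in J_j}\{|X_{n_k}|\ge j\,n_k\}$, enlarging each block until $\hV(F_j)\ge1-2^{-j}$. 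These $F_j$ are closed, depend on disjoint (hence independent) coordinates, and $\{F_j\ \text{i.o.}\}\subseteq\{\limsup_k|X_{n_k}|/n_k=\infty\}$.

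Under (CC), $\mbE=\sup_{P\in\mathcal P}E_P$ with $\mathcal P$ weakly compact, and the factorization gives, after smooth approximation of indicators, $\sup_{P}P\big(\bigcap_{j=j_0}^{J}F_j\big)\ge\prod_{j=j_0}^{J}\hV(F_j)\ge\prod_{j\ge j_0}(1-2^{-j})=:c_{j_0}$, with $c_{j_0}\to1$. Because finite intersections of the closed sets $F_j$ are closed, $P\mapsto P(\cdot)$ is upper semicontinuous on $\mathcal P$ and weak limits preserve their mass from below; passing to a weak limit of near-optimal measures should produce $P\in\mathcal P$ charging the tail event, and since $\Vstar\ge\V^{\mathcal P}\ge P$ the same lower bound transfers to $\Vstar$. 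In particular $P(\limsup_n|S_n|/n=\infty)=1$ immediately gives $\Vstar(\limsup_n|S_n|/n=\infty)=1$, so the whole theorem reduces to producing a single measure that charges $B:=\{\limsup_k|X_{n_k}|/n_k=\infty\}$ fully.

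The hard part is upgrading \emph{positive} mass to mass \emph{exactly one} on the non-closed tail event $B$. Two features obstruct a naive compactness argument: independence under $\mbE$ does not descend to the individual representing measures $P\in\mathcal P$, so no classical $0$--$1$ law is available under a fixed $P$; and weak limits may let the successful block indices escape to infinity, since $B=\bigcap_{j_0}\bigcup_{j\ge j_0}F_j$ is only $F_\sigma$, not closed. I would resolve this by establishing a zero--one law for tail events of the thinned independent sequence at the capacity level, giving $\Vstar(B)\in\{0,1\}$; combined with the bound $\Vstar(B)\ge c_0>0$ from the block construction this forces $\Vstar(B)=1$, after which (CC) together with the weak compactness of $\mathcal P$ realizes this capacity-one tail event as $P(B)=1$ for a single $P\in\mathcal P$. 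Proving this $0$--$1$ law, so as to rule out the escaping-mass phenomenon, is the step I expect to require the most care.
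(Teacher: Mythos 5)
Your skeleton matches the paper's proof: reduce via $|X_n|\le|S_n|+|S_{n-1}|$ to showing $\limsup_n|X_n|/n=\infty$, thin to the subsequence $\{X_{k(m+1)}\}$ which is independent by $m$-dependence, deduce $\sum_k\hV\left(|X_1|\ge Mk(m+1)\right)=\infty$ for all $M$ from \eqref{eqth4.1} (this is Lemma \ref{lemma3.5}), and defeat the ``all $M$ at once'' problem by letting the threshold grow (your block scale $j$ plays the role of the paper's sequence $l_i\nearrow\infty$ with $\sum_i\hV(|X_1|\ge 2l_ii)=\infty$). The gap is in the decisive last step, which you yourself flag as open, and two of your intermediate claims fail as stated. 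First, the inequality $\sup_P P\bigl(\bigcap_{j=j_0}^J F_j\bigr)\ge\prod_{j=j_0}^J\hV(F_j)$ confuses inner and outer approximation: $\hV$ is defined by minimizing $\mbE[\xi]$ over $\xi\ge I_A$, so $\hV(F_j)\ge 1-2^{-j}$ gives no lower bound on $P(F_j)$ for any $P\in\mathcal{P}$ (only $\V^{\mathcal{P}}\le\hV$ holds, and the gap can be strict). What the multiplicativity engine actually yields, after smooth approximation of indicators, is an upper bound on $\inf_{P\in\mathcal P} P\bigl(\bigcap_{k\in J_j}\{|X_{n_k}|<jn_k\}\bigr)$ for each block \emph{separately}, i.e.\ on $\mv^{\mathcal P}(F_j^c)$; a block-by-block bound on an infimum is exactly what cannot rule out your escaping-mass phenomenon, since the near-optimal measures may differ from block to block. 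Second, the proposed repair via a zero--one law $\Vstar(B)\in\{0,1\}$ is both unproven and the wrong tool: Kolmogorov-type zero--one laws in this framework require continuity of the capacity, which the paper explicitly notes essentially forces the expectation to be linear; and even granting $\Vstar(B)=1$, this does not produce $P\in\mathcal P$ with $P(B)=1$, because $\V^{\mathcal P}\le\Vstar$ and $B$ is not closed, so no compactness argument applies.

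The paper closes precisely this gap with Lemma \ref{lemBC2} (its converse Borel--Cantelli lemma under (CC)), whose mechanism is quantitative rather than qualitative and uses no zero--one law: one takes near-optimal measures for the monotone smooth approximations $f(\cdot/\delta)$, extracts a \emph{single} weak limit $P\in\mathcal P$ by countable-dimensional compactness, and the monotonicity of $f(\cdot/\delta)$ in $\delta$ guarantees that this one $P$ inherits the exponential bounds $P\bigl(\bigcap_{k\in J_j}\{|X_{n_k}|<jn_k\}\bigr)\le\exp\bigl\{-\sum_{k\in J_j}\V(|X_{n_k}|\ge jn_k)\bigr\}$ for \emph{all} blocks simultaneously; once these are summable (your $2^{-j}$ choice), the classical convergent Borel--Cantelli under that fixed $P$ finishes the argument. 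Indeed, with Lemma \ref{lemBC2}(iii) in hand your construction terminates immediately, since your $F_j$ are closed sets determined by disjoint, hence independent, blocks of coordinates — and the paper itself does not even need the blocks, applying the lemma directly to the events $\{|X_{i(m+1)}|\ge l_ii\}$. But absent a proof of that lemma (or of your conjectured zero--one law), your proposal leaves the central difficulty of the theorem unresolved.
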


\section{Related inequalities and properties}\label{sec:inequality}
Before we strat to show our main results, we need some basic lemmas as preparation. 
The first two lemmas are about 
exponential inequalities and Kolmogorov's maximal inequalities under both $\V$ and $\mv$, whose proofs are given in Lemma 3.1 of Zhang\cite{zhang21} and Lemma 2.7 of Zhang\cite{zhang23b}, are also needed in our proof.
\begin{lemma}
 Let $\{X_1,\cdots,X_n\}$ be a sequence of independent random variables in the sub-linear expectation space $\sles$. Set $S_n=\sum_{i=1}^n X_i, B_n^2=\sum_{i=1}^n \mbE[X_i^2]$. Then for all $x,y>0$, $0<\delta\leq1$ and $p\ge 2$,
\begin{align}
	 &\V\left(\max_{k\leq n}(S_k-\mbE[S_k])\geq x\right)\quad\left(resp.\mv\left(\max_{k\leq n}(S_k-\mbe[S_k])\geq x\right)\right)\nonumber\\
 \leq&C_p\delta^{-p}x^{-p}\sum_{i=1}^n\mbE[X_i^2]+\exp\left\{-\frac{x^2}{2(1+\delta)B_n^2}\right\}.
\end{align}
Further, by noting that $xe^{-x}\leq e^{-1}$ when $x\geq0$, we have Kolmogorov's maximal inequalities under both $\V$ and $\mv$ as follows:
\begin{align}
	&\V\left(\max_{k\leq n}(S_k-\mbE[S_k])\geq x\right)\leq Cx^{-2}\sum_{i=1}^n\mbE[X_i^2],\label{kolV}\\
	&\mv\left(\max_{k\leq n}(S_k-\mbe[S_k])\geq x\right)\leq Cx^{-2}\sum_{i=1}^n\mbE[X_i^2].\label{kolv}
\end{align}
\label{Kol}
\end{lemma}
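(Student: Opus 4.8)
The plan is to prove the upper-capacity statement in full and to obtain the lower-capacity one by a conjugate argument; the two tail estimates then yield \eqref{kolV} and \eqref{kolv}. I would keep $y>0$ as a free truncation level and optimize only at the end. The two structural facts I rely on, both consequences of the independence notion of Section~\ref{sec:setting} together with positive homogeneity, are the exponential Markov inequalities coming from \eqref{eq1.1}, namely $\V(W\ge x)\le e^{-\lambda x}\mbE[e^{\lambda W}]$ and $\mv(W\ge x)\le e^{-\lambda x}\mbe[e^{\lambda W}]$ for every $\lambda>0$ (the latter since $1-e^{\lambda(W-x)}\le I_{\{W<x\}}$ gives $\mv(W\ge x)=1-\V(W<x)\le\mbe[e^{\lambda(W-x)}]$), and the factorizations $\mbE[e^{\lambda S_n}]=\prod_{i=1}^n\mbE[e^{\lambda X_i}]$ and $\mbe[e^{\lambda S_n}]=\prod_{i=1}^n\mbe[e^{\lambda X_i}]$. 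These hold because for $Y$ independent of $X$ and a constant $c>0$ one has $\mbE[c\,e^{\lambda X}]=c\,\mbE[e^{\lambda X}]$ and $\mbE[-c\,e^{\lambda X}]=-c\,\mbe[e^{\lambda X}]$, so the independence formula splits off each successive factor.

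First I would truncate. Fix $y>0$ and set $Y_i=X_i\wedge y$, so the $Y_i$ remain independent, are bounded above by $y$, and satisfy $\mbE[Y_i^2]\le\mbE[X_i^2]$. Writing $T_k=\sum_{i\le k}Y_i$ and using $Y_i\le X_i$ (hence $\mbE[T_k]\le\mbE[S_k]$), one checks the inclusion
\[
\Big\{\max_{k\le n}(S_k-\mbE[S_k])\ge x\Big\}\subseteq\Big\{\max_{i\le n}X_i>y\Big\}\cup\Big\{\max_{k\le n}(T_k-\mbE[T_k])\ge x\Big\},
\]
so by sub-additivity of $\V$ the problem splits into a large-values term and a bounded term. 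The large-values term is bounded by sub-additivity, $\V(\max_i X_i>y)\le\sum_{i=1}^n\V(X_i>y)$, each summand being estimated through \eqref{eq1.1} by a Markov inequality of order $p$; choosing $y$ proportional to $\delta x$ produces the polynomial correction $C_p\delta^{-p}x^{-p}\sum_{i=1}^n\mbE[X_i^2]$.

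For the bounded term I would run the exponential argument. Using $e^t\le 1+t+\tfrac12 t^2 e^{t^+}$ with $Y_i\le y$ and sub-additivity, $\mbE[e^{\lambda(Y_i-\mbE[Y_i])}]\le\exp\{\tfrac12\lambda^2 e^{\lambda y}\mbE[X_i^2]\}$ for $\lambda>0$, and the factorization gives $\mbE[e^{\lambda(T_n-\mbE[T_n])}]\le\exp\{\tfrac12\lambda^2 e^{\lambda y}B_n^2\}$. To pass from $T_n$ to $\max_{k\le n}T_k$ I would use a first-passage (Doob-type) argument: with $\tau=\min\{k\le n:\,T_k-\mbE[T_k]\ge x\}$, decompose over $\{\tau=k\}$ and peel the ``future'' exponential off the ``past'' by the independence formula, using that each single factor satisfies $\mbE[e^{\lambda(Y_i-\mbE[Y_i])}]\ge e^{\lambda\mbE[Y_i-\mbE Y_i]}=1$ by Jensen, so the future factor is $\ge 1$. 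This reduces the maximal event to the one-point bound. Taking $\lambda y\le\ln(1+\delta)$ and $\lambda=x/((1+\delta)B_n^2)$ optimizes the bounded term to $\exp\{-x^2/(2(1+\delta)B_n^2)\}$, and combining gives the stated inequality.

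The main obstacle is precisely this first-passage step under a sub-linear expectation: the indicators $I_{\{\tau=k\}}$ are not in $\mathcal{H}$ and $\mbE$ is only sub-additive, so the classical identity $\mbE[e^{\lambda T_n}]=\sum_k\mbE[e^{\lambda T_n}I_{\{\tau=k\}}]$ and the tower/orthogonality manipulations survive only as inequalities. Making this rigorous requires approximating the stopping-time indicators by functions in $C_{l,Lip}$ and invoking the independence formula at each level, and it is the accumulated slack here, together with the truncation, that accounts for the constant $C_p$ and the parameter $\delta$. Once the two-sided exponential bound is in hand, \eqref{kolV} and \eqref{kolv} follow by taking $p=2$ in the polynomial term and applying $ue^{-u}\le e^{-1}$ with $u=x^2/(2(1+\delta)B_n^2)$, which converts the exponential term into a multiple of $x^{-2}B_n^2$. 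The lower-capacity statement is obtained verbatim from the $\mv$-Markov inequality and the $\mbe$-factorization recorded above, so no independent argument is needed.
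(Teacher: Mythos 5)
First, a point of calibration: the paper never proves Lemma \ref{Kol} --- it is quoted with the proofs attributed to Lemma 3.1 of Zhang \cite{zhang21} and Lemma 2.7 of Zhang \cite{zhang23b} --- so your proposal must be measured against those arguments, and measured that way it has a genuine gap, located exactly at the step you yourself flag as ``the main obstacle.'' Your plan requires the first-passage decomposition
$$
\mbE\big[e^{\lambda (T_n-\mbE[T_n])}\big]\ \ge\ \sum_{k=1}^n \mbE\big[e^{\lambda (T_n-\mbE[T_n])}\,h_k\big],\qquad h_k\approx I_{\{\tau=k\}},
$$
after which each summand is handled by peeling off the future factor. The peeling itself is sound (independence plus your Jensen observation do work term by term), but the displayed inequality is not available: a sub-linear expectation is \emph{sub}-additive, so for nonnegative summands one only has $\mbE[\sum_k\cdot\,]\le\sum_k\mbE[\cdot\,]$, the \emph{reverse} of what the Doob/optional-stopping argument needs, and there is no super-additivity over the disjoint events $\{\tau=k\}$ to fall back on. This is not ``accumulated slack'' absorbable into $C_p$ and $\delta$: approximating $I_{\{\tau=k\}}$ by $C_{l,Lip}$ functions changes nothing about the direction of an inequality. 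This failure of additivity is precisely the known obstruction for maximal inequalities under sub-linear expectations, and it is why the cited proofs use a different mechanism: roughly, each smoothed first-passage indicator $h_k$ is multiplied by a Lipschitz function $\psi_k$ of the \emph{future} increment $S_n-S_k$, so that the favourable part $\sum_k h_k\psi_k$ recombines \emph{pointwise} into a single event $\{S_n\ge x-y\}$ (no additivity needed), while the unfavourable part $\sum_k h_k(1-\psi_k)$ is estimated by sub-additivity --- now pointing the right way --- together with independence and a Chebyshev bound on each factor $\mbE[1-\psi_k]$; the exponential bound is then only ever applied to the non-maximal event. Without this (or some substitute) device, the maximal inequality --- which is the entire content of the lemma beyond a union bound --- is not established.

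A secondary problem is your polynomial term. With only $\mbE[X_i^2]$ available, Markov applied to $\V(X_i>y)$ with $y\propto\delta x$ gives $C\delta^{-2}x^{-2}\sum_i\mbE[X_i^2]$, not $C_p\delta^{-p}x^{-p}\sum_i\mbE[X_i^2]$; for $x>1$ and $p>2$ the latter is strictly smaller, so the truncation step cannot produce the stated first term. In fact, the display as printed in the paper (second moments paired with $x^{-p}$) is not provable at all --- it already fails in classical probability for $p>2$ with barely-square-integrable heavy tails --- because in Zhang \cite{zhang21} that term carries $p$-th moments of the summands; the dangling, unused ``$y$'' in the statement is another trace of the misquotation. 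None of this affects the Kolmogorov inequalities \eqref{kolV} and \eqref{kolv}, which correspond to $p=2$ and are the only consequences the paper actually uses; for those, your outline (truncation, exponential moment bound, $u e^{-u}\le e^{-1}$, and the conjugate argument for $\mv$, whose Markov inequality and $\mbe$-factorization you derive correctly) would go through once, and only once, the maximal step is repaired along the lines above.
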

\begin{lemma}
	Let $\{X_{k};k=1,\cdots,n\}$ be a sequence of independent random variables in the sub-linear expectation space $\sles$ such that $\mbE[X_{k}^2]<\infty,k=1,\cdots,n$. Then for any constants $\{\mu_k\}$ satisfying $\mbe[X_k]\leq\mu_{k}\leq\mbE[X_k],k=1\cdots,n$, we have
$$
	\mv\left(\max_{k\leq n}\left|\sum_{i=1}^k(X_i-\mu_i)\right|\geq x\right)\leq \frac{2}{x^2}\sum_{i=1}^n\mbE[X_i^2],\enspace\forall x>0.
$$
\label{lemma2.7}
\end{lemma}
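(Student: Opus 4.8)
The plan is to pass to the centred variables $Y_i=X_i-\mu_i$ and to note that the hypothesis $\mbe[X_i]\le\mu_i\le\mbE[X_i]$ is precisely what forces the means of the increments to straddle zero: $\mbe[Y_i]\le 0\le\mbE[Y_i]$. With $T_k=\sum_{i\le k}Y_i$ and $D=\{\max_{k\le n}|T_k|\ge x\}$ the claim reads $\mv(D)\le 2x^{-2}\sum_{i=1}^n\mbE[X_i^2]$. Since $\mv(D)=1-\V(D^c)$, I would bound $\mv(D)$ from above by bounding $\V(D^c)$ from below, and for the latter I use the defining inequality \eqref{eq1.1}: every $f\in\mathcal{H}$ with $f\le I_{D^c}$ yields $\V(D^c)\ge\mbE[f]$. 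It is worth stressing that the statement is genuinely one about the lower capacity: the one-sided maximal inequalities \eqref{kolV}--\eqref{kolv} of Lemma \ref{Kol} do not combine into a two-sided bound for $\mv$, because splitting $\mv(D)\le\mv(\cdot)+\V(\cdot)$ leaves a $\V$-term whose natural centring has the wrong sign. Producing a good test function from below is therefore the right route.

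The test function I would use is the stopped Lyapunov function $f=1-x^{-2}T_\tau^2$, where $\tau=\min\{k\le n:|T_k|\ge x\}$, with $\tau=n$ if no such index exists. On $D^c$ one has $|T_\tau|=|T_n|<x$, so $0<f\le 1$; on $D$ the exit time satisfies $|T_\tau|\ge x$, so $f\le 0$; hence $f\le I_{D^c}$. Using $\mbE[c+Z]=c+\mbE[Z]$, positive homogeneity and $\mbE[-W]=-\mbe[W]$ gives $\mbE[f]=1-x^{-2}\mbe[T_\tau^2]$, and therefore $\mv(D)\le x^{-2}\mbe[T_\tau^2]$. A technical point here is that $T_\tau$ is not a continuous function of $(Y_1,\dots,Y_n)$, so $f$ need not lie in $\mathcal{H}$; I would circumvent this by replacing the sharp exit rule $I\{|T_k|\ge x\}$ by Lipschitz ramps, running the estimate, and passing to the limit.

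The analytic heart, and the step I expect to be the main obstacle, is the lower-expectation second-moment bound $\mbe[T_\tau^2]\le\sum_{i=1}^n\mbE[Y_i^2]$. A naive sub-/super-additive expansion fails, and instructively so: for independent increments with two-sided-centred means one has $\mbE[Y_iY_j]\ge 0$, so the cross terms push $\mbE[(\sum_iY_i)^2]$ \emph{above} $\sum_i\mbE[Y_i^2]$ --- which is exactly why no such bound can hold for the upper expectation $\mbE$. The correct mechanism is to work inside the \emph{lower} expectation and let independence kill the cross terms. Peeling off the last increment $V=Y_n$, which is independent of $U=T_{n-1}$ with $\mbe[V]\le 0\le\mbE[V]$, one checks $\mbe[2uV]\le 0$ for every constant $u$ (the sign is verified separately for $u\ge 0$ and $u<0$), whence $\mbe[(u+V)^2]\le u^2+\mbE[V^2]$. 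Substituting this into the independence identity $\mbE[-(U+V)^2]=\mbE[g(U)]$ with $g(u)=\mbE[-(u+V)^2]$, and using monotonicity, gives $\mbe[T_n^2]\le\mbe[T_{n-1}^2]+\mbE[Y_n^2]$; iterating yields the bound, and the same recursion with the increment $Y_n$ multiplied by the $\mathcal{F}_{n-1}$-measurable factor $I\{\tau\ge n\}$ handles the stopped sum $T_\tau$.

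Finally I would return to $\mbE[X_i^2]$. Since $|\mu_i|\le\mbE[|X_i|]\le(\mbE[X_i^2])^{1/2}$, a routine estimate of $\mbE[(X_i-\mu_i)^2]$ by a multiple of $\mbE[X_i^2]$ (via sub-additivity and positive homogeneity) is absorbed into the stated constant, leading to $\mv(D)\le x^{-2}\sum_i\mbE[Y_i^2]\le 2x^{-2}\sum_i\mbE[X_i^2]$. I expect both this constant bookkeeping and the smoothing of the exit rule to be routine; the one conceptually delicate ingredient is the lower-expectation second-moment bound, where the two-sided centring $\mbe[X_i]\le\mu_i\le\mbE[X_i]$ enters in an essential way and which has no counterpart for $\V$.
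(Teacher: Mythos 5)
A preliminary remark on the comparison: the paper does not prove this lemma at all --- it is quoted from Lemma 2.7 of Zhang \cite{zhang23b} --- so your proposal must be judged on its own merits. Two of your ingredients are correct, and they are indeed the essential ones. First, the reduction of the $\mv$-bound to a test-function estimate via \eqref{eq1.1}: if $f\in\mathcal{H}$ and $f\le I_{D^c}$, then $\V(D^c)\ge \mbE[f]$, i.e.\ $\mv(D)\le 1-\mbE[f]$; you are also right that \eqref{kolV}--\eqref{kolv} cannot simply be combined, so a test function is the only route. Second, the cross-term lemma: writing $Y_i=X_i-\mu_i$, your sign analysis gives $\mbe[2(t-\mu_i)X_i]\le 2(t-\mu_i)\mu_i$ for every real $t$, whence
\[
\mbe\bigl[(t+Y_i)^2\bigr]\le t^2-\mu_i^2+\mbE[X_i^2]\le t^2+\mbE[X_i^2],
\]
which is exactly the mechanism that kills cross terms under the lower expectation. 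You should, however, use it in this form, with $\mu_i$ kept inside. Your final bookkeeping step $\mbE[(X_i-\mu_i)^2]\le 2\,\mbE[X_i^2]$ is \emph{false} in general: if $X_i$ is maximally distributed on $[-1,1]$ (so $\mbE[\varphi(X_i)]=\max_{|v|\le 1}\varphi(v)$) and $\mu_i=1$, then $\mbE[(X_i-\mu_i)^2]=4=4\,\mbE[X_i^2]$. Thus the route ``recursion for $\sum_i\mbE[Y_i^2]$, then compare with $\mbE[X_i^2]$'' only yields the constant $4$ and does not prove the stated bound, whereas keeping $\mu_i$ inside the recursion loses no constant at all.

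The genuine gap is the treatment of the maximum, which you delegate to ``smoothing the exit rule'' and expect to be routine; it is not, and as described it fails. Smoothing the indicators in $T_\tau=\sum_i Y_i\prod_{k<i}I\{|T_k|<x\}$ by Lipschitz gates $\psi$ (say $\psi=1$ on $[0,x-\epsilon]$, $\psi=0$ on $[x,\infty)$) gives $\widetilde T=\sum_i Y_i\prod_{k<i}\psi(|T_k|)\in\mathcal{H}$, but $f=1-x^{-2}\widetilde T^{\,2}$ is no longer dominated by $I_{D^c}$. Take $n=2$, $Y_1=x-\delta$ with $0<\delta<\epsilon$, and $Y_2=-(x-\delta)/\psi(x-\delta)$: then $\widetilde T=Y_1+\psi(|Y_1|)Y_2=0$, so $f=1$, while $|T_2|=(x-\delta)\bigl(1/\psi(x-\delta)-1\bigr)\to\infty$ as $\delta\downarrow 0$, so the path exits every level. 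Nothing in the hypotheses excludes such sample points, so the pointwise domination required by \eqref{eq1.1} fails on $D$ (and on any enlargement of $D$), for every fixed smoothing parameter; note the failure is not in your recursion (which does extend to $[0,1]$-valued, past-measurable gates) but in the domination itself. The obstruction is intrinsic: $(y_1,\dots,y_n)\mapsto T_\tau$ has jumps of unbounded size, so no continuous gating of the \emph{increments} can both lie in $\mathcal{H}$ and keep $|\widetilde T|\ge x$ on $D$. The repair is to abandon the stopped value and take as test function the product of gates at the true partial sums, $f=\prod_{k=1}^n\chi(T_k)$, with $\chi$ Lipschitz, $\chi=1$ on $\{|s|\le x-2\epsilon\}$ and $\chi=0$ on $\{|s|\ge x-\epsilon\}$; this $f$ lies in $\mathcal{H}$ and satisfies $f\le I_{D^c}$ trivially, because some factor vanishes as soon as a true partial sum exits. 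Its upper expectation is bounded below by backward induction: setting $h_{n+1}\equiv 1$ and $h_k(t)=\mbE[\chi(t+Y_k)h_{k+1}(t+Y_k)]$, repeated use of independence (with positive homogeneity to pull out the nonnegative factor $\prod_{j<k}\chi(T_j)$) gives $\mbE[f]=h_1(0)$, and the hypothesis
\[
h_{k+1}(s)\ \ge\ q_{k+1}(s):=1-\frac{s^2+\sum_{i>k}\mbE[X_i^2]}{(x-2\epsilon)^2}
\]
propagates downward, because $q_{k+1}(s)\le 0$ wherever $\chi(s)<1$ (so $\chi h_{k+1}\ge \chi q_{k+1}\ge q_{k+1}$ pointwise) and because your cross-term inequality bounds $\mbe[(t+Y_k)^2]$. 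This yields $\mv(D)\le (x-2\epsilon)^{-2}\sum_{i=1}^n\mbE[X_i^2]$ for every $\epsilon>0$, and letting $\epsilon\to 0$ in this numerical inequality (no continuity of any capacity is needed) proves the lemma, in fact with constant $1$. In short, your two correct ideas do carry the proof, but only after the stopped-sum-plus-smoothing step --- precisely the step you flagged as routine --- is replaced by this product/backward-induction argument.
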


The next lemma is "the convergent part" of Borel-Cantelli lemma under the sub-linear expectation space, which have been proved in many papers.
\begin{lemma}\label{lemBC1}
(\cite{zhang21} Lemma 4.1) (i) Let $\{A_n,n\geq 1\}$ be a sequence of events in $\mathcal{F}$. Suppose that $\V$ is a sub-additive capacity and $\sum_{n=1}^\infty \V(A_n)<\infty$. Then
	$$
	\lim_{n\rightarrow\infty}\max_N\V\left(\bigcup_{i=n}^N A_i\right)=0.
$$
	If $\V$ is a countably sub-additive capacity, then
	$$
	\V(A_n,i.o.)=0.
	$$
\label{BC}
\end{lemma}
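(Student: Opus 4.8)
The plan is to reduce both assertions to the defining monotonicity and (finite or countable) sub-additivity of a capacity, together with the elementary fact that the tail of a convergent series vanishes. No probabilistic machinery beyond the capacity axioms is needed here; the content is purely that $\V$ behaves like an outer set function on the relevant unions, so the classical convergent Borel--Cantelli argument transfers verbatim once additivity is replaced by sub-additivity.

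For the first assertion I would fix $n$ and observe that for every $N\ge n$ the union $\bigcup_{i=n}^N A_i$ is a \emph{finite} union, so the finite sub-additivity of $\V$ (extended by induction from $\V(A\cup B)\le\V(A)+\V(B)$) gives
$$
\V\left(\bigcup_{i=n}^N A_i\right)\le\sum_{i=n}^N\V(A_i)\le\sum_{i=n}^\infty\V(A_i).
$$
The right-hand side is independent of $N$, hence $\max_N\V(\bigcup_{i=n}^N A_i)\le\sum_{i=n}^\infty\V(A_i)$. Since $\sum_{n\ge1}\V(A_n)<\infty$, its tail tends to $0$ as $n\to\infty$, which yields $\lim_{n}\max_N\V(\bigcup_{i=n}^N A_i)=0$.

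For the second assertion I would write the event as $\{A_n,\ i.o.\}=\bigcap_{n\ge1}\bigcup_{i\ge n}A_i$ and note that it is contained in $\bigcup_{i\ge n}A_i$ for every $n$. When $\V$ is countably sub-additive, applying it to this countable union and then invoking monotonicity gives
$$
\V(A_n,\ i.o.)\le\V\left(\bigcup_{i=n}^\infty A_i\right)\le\sum_{i=n}^\infty\V(A_i).
$$
Letting $n\to\infty$ sends the tail of the convergent series to $0$, so $\V(A_n,\ i.o.)=0$.

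There is essentially no obstacle here; the one point demanding care is matching the right form of sub-additivity to the right union. Finite sub-additivity is all that is available \emph{and} all that is needed for the monotone-in-$N$ finite unions of part (i), whereas the passage to the infinite union $\bigcup_{i\ge n}A_i$ in part (ii) genuinely requires the countable sub-additivity hypothesis and fails for a merely finitely sub-additive capacity. This is precisely why the countably sub-additive extension $\Vstar$, rather than $\hV$ itself, must be the capacity under which every $i.o.$-statement is invoked in the proofs of the main theorems.
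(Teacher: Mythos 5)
Your proof is correct and is precisely the standard argument behind this lemma: the paper itself states it without proof, citing Lemma 4.1 of Zhang (2021), which is proved in exactly this way --- finite sub-additivity plus the vanishing tail of the convergent series $\sum_{i=n}^{\infty}\V(A_i)$ for the first claim, and countable sub-additivity applied to $\bigcup_{i\ge n}A_i$ together with monotonicity for the second. Your closing remark about why the countably sub-additive extension $\Vstar$ (rather than $\hV$) must carry the $i.o.$-statements also matches the paper's stated motivation for introducing $\Vstar$; nothing further is needed.
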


The next lemma on the converse part of the Borel-Cantelli lemma is a refinement of Lemma 2.3 of Zhang\cite{zhang23b}.

\begin{lemma}\label{lemBC2} Let $(\Omega,\mathcal{H},\mbE)$ be a sub-linear expectation  space with  a capacity $\V$ having  the property \eqref{eq1.1}, and $\mv=1-\V$. Suppose the condition (CC) is satisfied. 
 \begin{description}
   \item[\rm (i)] Let $\{X_n;n\ge 1\}$ be a sequence of independent random variables in  $(\Omega,\mathcal{H},\mbE)$. If $\sum_{n=1}^{\infty}\mv(X_n<1)<\infty$,  then there exists $P\in\mathcal{P}$ such that 
   \begin{equation}\label{eqlemBC2.1}P\left(\bigcup_{m=1}^{\infty} \bigcap_{i=m}^n \{X_i\ge 1\}\right)=1\;\; i.e., \; P\left( X_i<1 \; i.o.\right)=0.
   \end{equation}
   \item[\rm (ii)]   Suppose $\{\bm X_n;n\ge 1\}$ is a sequence of independent random vectors  in $(\Omega,\mathcal{H},\mbE)$, where $\bm X_n$ is $d_n$-dimensional,  $f_{n,j}\in C_{l,lip}(\mathbb R^{d_n})$ and $\sum_{n=1}^{\infty}\V(f_{n,j}(\bm X_n)\ge 1)=\infty$, $j=1,2,\ldots$,  then there exists $P\in\mathcal{P}$ such that
    \begin{equation}\label{eqlemBC2.3} P\left( \bigcap_{j=1}^{\infty}\big\{f_{n,j}(\bm X_n)\ge 1\;\; i.o.\big\} \right)=1.
   \end{equation}
\item[\rm (iii)] Suppose $\{\bm X_n;n\ge 1\}$ is a sequence of independent random vectors  in $(\Omega,\mathcal{H},\mbE)$, where $\bm X_n$ is $d_n$-dimensional. If $F_n$ is a $d_n$-dimensional close set with $\sum_{n=1}^{\infty} \mv(\bm X_n \not\in F_n)<\infty$, then there exists $P\in\mathcal{P}$ such that
          $$ P\left( \bm X_n\not\in F_n\; i.o.\right)=0; $$
          If $F_{n,j}$s are $d_n$-dimensional  closed sets with $\sum_{n=1}^{\infty} \V(\bm X_n\in F_{n,j})=\infty$, $j=1,2,\ldots$, then then there exists $P\in\mathcal{P}$ such that
          $$ P\left(\bigcap_{j=1}^{\infty}\big\{\bm X_n\in F_{n,j}\;\; i.o.\big\}\right)=1. $$
 \end{description}
\end{lemma}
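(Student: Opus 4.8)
The plan is to deduce all four assertions from one construction: build a single probability measure $P^*\in\mathcal{P}^e$ as a product of well-chosen marginal laws of the $\bm X_n$, and then apply the classical (countably additive) Borel--Cantelli lemma under $P^*$. A single measure is genuinely needed here: a degenerate $P\in\mathcal{P}$ can force a ``bad'' event to occur always even when its upper capacity is large, so neither the convergent nor the divergent conclusion can hold for \emph{every} $P$; the $\mbE$-independence is used precisely to make the product measure $P^*$ admissible (dominated by $\mbE$) and to render the $\bm X_n$ independent under $P^*$, so that the convergent half needs no independence while the divergent half may invoke the second Borel--Cantelli lemma and Kolmogorov's $0$--$1$ law. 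By the equivalence recorded after condition (CC) we may take the target family to be $\mathcal{P}^e$.

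Two facts drive the construction. First, for a closed set $F\subset\mathbb{R}^{d_n}$ one has $\hV(\bm X_n\in F)=\sup_{\nu\in\mathcal{P}_n}\nu(F)$ with the supremum attained, where $\mathcal{P}_n$ is the marginal family of $\bm X_n$ furnished by (CC). This follows by writing $\hV(\bm X_n\in F)=\inf\{\mbE[g]:g\ge I_{\{\bm X_n\in F\}}\}=\inf_g\sup_{\nu}\int g\,d\nu$, exchanging $\inf$ and $\sup$ by a minimax argument (convexity of $\{g\ge I_F\}$ and weak compactness of $\mathcal{P}_n$ from (CC)), and using $\inf_{g\ge I_F}\int g\,d\nu=\nu(F)$ for closed $F$ together with upper semicontinuity of $\nu\mapsto\nu(F)$ along a weakly convergent maximizing sequence to get attainment. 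Since the given $\V$ satisfies \eqref{eq1.1}, $\V\le\hV$, so the capacity of a closed set is realized by a genuine measure. Second, if $\nu_n\in\mathcal{P}_n$ are chosen and $\mu=\bigotimes_n\nu_n$, then for every cylinder $\varphi\in C_{b,Lip}$ one has $\int\varphi\,d\mu\le\mbE[\varphi(\bm X_1,\dots,\bm X_N)]$: integrate out $\bm X_N,\bm X_{N-1},\dots$ one coordinate at a time, invoking $\mbE$-independence and $\nu_k\le\mbE[\,\cdot\,(\bm X_k)]$ at each step. Hence $\mu$ extends (Daniell--Kolmogorov, using the regularity equivalent to (CC)) to a measure $P^*\in\mathcal{P}^e$ on $\sigma(\mathcal{H})$ under which the $\bm X_n$ are independent with the prescribed marginals.

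For the convergent statements (part (i) and the first half of (iii)) I take $\nu_n$ attaining $\hV(\bm X_n\in F_n)$, with $F_n=\{X_n\ge1\}$ in (i); then $\nu_n(\bm X_n\notin F_n)=1-\hV(\bm X_n\in F_n)\le 1-\V(\bm X_n\in F_n)=\mv(\bm X_n\notin F_n)$, so $\sum_nP^*(\bm X_n\notin F_n)=\sum_n\nu_n(\bm X_n\notin F_n)\le\sum_n\mv(\bm X_n\notin F_n)<\infty$, and the classical convergent Borel--Cantelli lemma for the single measure $P^*$ gives $P^*(\bm X_n\notin F_n\ \text{i.o.})=0$. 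For the divergent statements (part (ii) and the second half of (iii)) the difficulty is to serve all indices $j$ with one marginal per $n$. Writing $E_{n,j}=\{f_{n,j}(\bm X_n)\ge1\}$ (a closed set, since $f_{n,j}$ is continuous) or $\{\bm X_n\in F_{n,j}\}$, one has $\sum_n\hV(E_{n,j})\ge\sum_n\V(E_{n,j})=\infty$ for every $j$, so I partition $\mathbb{N}=\bigsqcup_j S_j$ by a round--robin/block scheme in which, since only finitely many indices are committed at each stage, each series stays divergent on its own block, i.e. $\sum_{n\in S_j}\hV(E_{n,j})=\infty$. Choosing $\nu_n$ to attain $\hV(E_{n,j})$ for the unique $j$ with $n\in S_j$ gives $\sum_nP^*(E_{n,j})\ge\sum_{n\in S_j}\nu_n(E_{n,j})=\infty$ for every $j$. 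Independence under $P^*$, the second Borel--Cantelli lemma and the $0$--$1$ law then yield $P^*(E_{n,j}\ \text{i.o.})=1$ for each $j$, and a countable intersection over $j$ produces \eqref{eqlemBC2.3} and the closed-set version in (iii).

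The main obstacle is the measure-construction machinery of the second paragraph, namely turning capacity values into \emph{genuine} probabilities in $\mathcal{P}^e$ that respect the $\mbE$-independence and extend consistently to all of $\sigma(\mathcal{H})$; this is exactly where the weak compactness and the regularity characterization of (CC) are indispensable, and where the restriction to \emph{closed} sets (so that the portmanteau inequality, the minimax exchange, and upper semicontinuity are available) is essential. A secondary, purely technical point is the boundary level set: $\{f_{n,j}\ge1\}$ being closed is what makes the attainment argument legitimate and sidesteps any gap between $\{f\ge1\}$ and $\{f>1\}$, after which the treatment of countably many divergent series is the elementary partition argument described above.
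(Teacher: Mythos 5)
Your strategy is genuinely different from the paper's, and its skeleton is sound: the paper never manufactures new measures, but instead picks near-minimizers in $\mathcal{P}$ for the \emph{lower} expectation of smoothed indicators (a sum $\sum_n f((X_n-1)/\delta_N)$ for part (i), a product $\prod_{i=m}^n f((f_{i,j}(\bm X_i)-1)/\epsilon)$ for part (ii)), passes to weak-compactness limits inside $\mathcal{P}$, and reduces (ii) to (i) by a block-maxima trick; like you, it finishes with the classical convergent Borel--Cantelli lemma under the single limit measure. Your alternative --- capacity-attaining marginals on closed sets, one product measure, both halves of classical Borel--Cantelli, and the round-robin partition of $\mathbb{N}$ to serve countably many divergent series --- is attractive because it makes the independence under $P^*$ explicit and dispenses with the paper's block construction in (ii). Two caveats on the parts you did justify: the attainment claim is true, but the clean proof is not a minimax exchange (the infimum defining $\hV$ runs over all dominating $\xi\in\mathcal{H}$, not just functions of $\bm X_n$); rather one takes $g_k(\bm x)=(1-k\,d(\bm x,F))^+\downarrow I_F$, picks $P_k\in\mathcal{P}$ nearly attaining $\mbE[g_k(\bm X_n)]=\sup_P P[g_k(\bm X_n)]$, and uses (CC)-weak compactness plus monotonicity of $g_k$ exactly as the paper does for its own limits. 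Also, your $P^*$ lies in $\mathcal{P}^e$ rather than the originally given $\mathcal{P}$; this is acceptable via the quoted equivalence, but it is a weaker form of the statement than the paper proves.

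The genuine gap is the sentence ``Hence $\mu$ extends (Daniell--Kolmogorov \dots) to a measure $P^*\in\mathcal{P}^e$ on $\sigma(\mathcal{H})$.'' Kolmogorov's extension theorem only produces the product measure $\mu=\bigotimes_n\nu_n$ on the range space $\prod_n\mathbb{R}^{d_n}$, which is not what you need: the map $\omega\mapsto(\bm X_n(\omega))_{n\ge1}$ need not be surjective, $\mu$ may charge the complement of its range, and $\sigma(\bm X_n;n\ge1)$ is in general strictly smaller than $\sigma(\mathcal{H})$, while membership in $\mathcal{P}^e$ requires domination $P^*[X]\le\mbE[X]$ for \emph{all} $X\in\mathcal{H}_b$. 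To close this you need three steps, none of which is ``Daniell--Kolmogorov'': (a) show the cylinder functional $L[\varphi(\bm X_1,\dots,\bm X_N)]:=\int\varphi\,d\mu$ is well defined as a functional on random variables on $\Omega$ --- two representations $\varphi(\bm X_{1:N})=\psi(\bm X_{1:N})$ can differ off the range of $(\bm X_n)$, so you must apply your iterated-independence domination to \emph{both} $\chi=\varphi-\psi$ and $-\chi$, getting $\pm\int\chi\,d\mu\le\mbE[\pm\chi(\bm X_{1:N})]=0$; (b) extend $L$ from the cylinder lattice to all of $\mathcal{H}_b$ below the sublinear functional $\mbE$ by Hahn--Banach, so that positivity and $L\le\mbE$ hold on $\mathcal{H}_b$; (c) invoke the regularity characterization of (CC) ($\mbE[X_k]\searrow0$ whenever $\mathcal{H}_b\ni X_k\searrow0$) to give $L$ the Daniell property, and then the Daniell--Stone theorem to represent $L$ by a countably additive $P^*$ on $\sigma(\mathcal{H})$. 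Each step works, so your proof is repairable, but as written the construction of $P^*$ --- which you correctly identify as the main obstacle and on which everything else rests --- is asserted rather than proved.
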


\begin{proof}[\bf{Proof}] (i) and (ii) are special cases of (iii). But, to prove the general case (iii), we need to show the two special cases first.   Without loss of generality, we can assume $0\le X_n\le 2$, for otherwise, we can replace it by $0\vee X_n\wedge 2$. Write $\bm X=(X_1,X_2,\ldots)$.  Then $\mathcal{P}\bm X^{-1}$ is a compact family of probability measures under the weak convergence by the condition (CC). Choose a Lipschitz and non-increasing function $f_i$ such that $I\{x< -1\}\le f(x)\le I\{x< 0\}$. Then $f(x/\delta)\ge f(x/\epsilon)$ for $0<\delta<\epsilon$. In fact,  $f(x/\delta)= f(x/\epsilon)=0$ when $x\ge 0$, and when $x<0$, $x/\delta \le x/\epsilon$  and so $f(x/\delta)\ge f(x/\epsilon)$.

Now, we consider (i).  
Note $\mbe\left[f\left(\frac{X_n-1}{\delta}\right)\right]\le \mv(X_n<1)$ for all $\delta>0$. Let $A=\sum_{n=1}^{\infty} \mv(X_n<1)$. 
Then by the independence,
$$ \mbe\left[\sum_{n=1}^N f\left(\frac{X_n-1}{\delta}\right)\right]=
\sum_{n=1}^N \mbe\left[f\left(\frac{X_n-1}{\delta}\right)\right]\le A \text{ for } N \text{ and } \delta. $$
 Choose $\delta_N\searrow 0$. For each $N$, there exists $P_N\in\mathcal{P}$ such that
 $$P_N\left[\sum_{n=1}^N f\left(\frac{X_n-1}{\delta_N}\right)\right]\le   \mbe\left[\sum_{n=1}^N f\left(\frac{X_n-1}{\delta_N}\right)\right]+\frac{1}{N}\le A+\frac{1}{N}. $$
By the weak compactness, there exists a subsequence $N^{\prime}\nearrow \infty$ and a probability measure $P\in\mathcal{P}$ such that
$$ P_{N^{\prime}} \left[\sum_{n=1}^N f\left(\frac{X_n-1}{\delta_N}\right)\right]\to 
P  \left[\sum_{n=1}^N f\left(\frac{X_n-1}{\delta_N}\right)\right] \text{ for all } N. $$
Note $f(x/\delta_{N^{\prime}})\ge f(x/\delta_{N})$ for $N^{\prime}\ge N$. It follows that
$$P \left[\sum_{n=1}^L f\left(\frac{X_n-1}{\delta_N}\right)\right]
\le \liminf_{N^{\prime}\to \infty}P_{N^{\prime}} \left[\sum_{n=1}^{N^{\prime}} f\left(\frac{X_n-1}{\delta_{N^{\prime}}}\right)\right]\le A \text{ for all } L\le N. $$
Since $f((x-1)/\delta)\to I\{x<1\}$ as $\delta\to 0$.  By the continuity of the probability measure $P$, 
$$\sum_{n=1}^L  P(X_n<1)\le A \text{ for all } L\ge 1. $$
It follows that
$$ \sum_{n=1}^{\infty} P(X_n<1)\le A<\infty,  $$
which implies \eqref{eqlemBC2.1}.

Consider (ii).  Without loss of generality, assume $0\le f_{n,j}(\bm X_n)\le 2$.  By the independence, for each $m$ and $j$ we have
\begin{align*}
& \mbe\left[\prod_{i=m}^n f\left(\frac{f_{i,j}(\bm X_i)-1}{\epsilon}\right)\right]
=\prod_{i=m}^n \mbe\left[f\left(\frac{f_{i,j}(\bm X_i)-1}{\epsilon}\right)\right] \\
\le &\prod_{i=m}^n \mv(f_{i,j}(\bm X_i)<  1) = \prod_{i=m}^n \left(1-\V(f_{i,j}(\bm X_i)\ge   1) \right)
\le \exp\left\{-\sum_{i=m}^n  \V(f_{i,j}(\bm X_i)\ge   1)\right\}\; \text{ for all } \epsilon>0.  
\end{align*}
There exists $P_{\epsilon}\in \mathcal{P}$ such that
$$ P_{\epsilon}\left[\prod_{i=m}^n f\left(\frac{f_{i,j}(\bm X_i)-1}{\epsilon}\right)\right] 
\le \exp\left\{-\sum_{i=m}^n  \V(f_{i,j}(\bm X_i)\ge   1)\right\} +\epsilon. $$
By the weak compactness, there exists a sequence $\epsilon_N\searrow 0$ and a probability measure $P\in \mathcal{P}$ such that
\begin{align*}
& P\left[\prod_{i=m}^n f\left(\frac{f_{i,j}(\bm X_i)-1}{\epsilon_L}\right)\right] 
=\lim_{N\to \infty} P_{\epsilon_N}\left[\prod_{i=m}^n f\left(\frac{f_{i,j}(\bm X_i)-1}{\epsilon_L}\right)\right] \\
\le & \lim_{N\to \infty} P_{\epsilon_N}\left[\prod_{i=m}^n f\left(\frac{f_{i,j}(\bm X_i)-1}{\epsilon_N}\right)\right]\le \exp\left\{-\sum_{i=m}^n \V(f_{i,j}(\bm X_i)\ge   1)\right\}.
\end{align*}
Letting $L\to \infty$ yields 
$$\mv^{\mathcal{P}}\left(\bigcap_{i=m}^{n} \{f_{i,j}(\bm X_i)<  1\}\right)
\le P\left(\bigcap_{i=m}^{n} \{f_{i,j}(\bm X_i)<  1\}\right)\le \exp\left\{-\sum_{i=m}^n  \V(f_{i,j}(\bm X_i)\ge   1)\right\}. $$
Let $\delta_k=2^{-k}$. Note $\sum_{i=1}^{\infty} \V(f_{i,j}(\bm X_i)\ge   1)=\infty$. We choose the sequence $1=n_{0,0}<n_{1,1}<n_{2,1}<n_{2,2}<\ldots<n_{k,1}<\ldots<n_{k,k}<n_{k+1,1}<\ldots$ such that
$$ \mv^{\mathcal{P}}\left(\bigcap_{i=n_{k,j-1}+1}^{n_{k,j}} \{f_{i,j}(\bm X_i)<  1\}\right)\le \delta_{k+j},\;\; j\le k, k\ge 1,  $$
where $n_{k,0}=n_{k-1,k-1}$. Let $Z_{k,j}=\max_{n_{k,j-1}+1\le i\le n_{k,j}}f_{i,j}(\bm X_i)$. Then the random variables $Z_{1,1}$, $Z_{2,1}$, $Z_{2,2}$, $\ldots$, $Z_{k,1}$, $\ldots$, $Z_{k,k}$, $Z_{k+1,1},\ldots$ are independent under $\mbE$ with
$$ \mv^{\mathcal{P}}(Z_{k,j}<1)< \delta_{k+j}. $$
Note $\sum_{k=1}^{\infty}\sum_{j=1}^k\delta_{k+j}<\infty$. By (i), there exists a probability measure $P\in\mathcal{P}$ such that
$$ P\left(\bigcup_{l=1}^{\infty}\bigcap_{k=l}^{\infty}\bigcap_{j=1}^k\{Z_{k,j}\ge 1\}\right)=1. $$
On the event $\bigcup_{l=1}^{\infty}\bigcap_{k=l}^{\infty}\bigcap_{j=1}^k\{Z_{k,j}\ge 1\}$, there exists a $l_0$ such that $Z_{k,j}\ge 1$ for all $k\ge l_0$ and $1\le j\le k$. For each fixed $j$, when $k\ge j\vee l_0$ we have $Z_{k,j}\ge 1$, and hence $\{f_{n,j}(\bm X_n)\ge  1 \;\; i.o\}$ occurs.
It follows that
$$ \bigcup_{l=1}^{\infty}\bigcap_{k=l}^{\infty}\bigcap_{j=1}^k\{Z_{k,j}\ge 1\}\subset \bigcap_{j=1}^{\infty}\{f_{n,j}(\bm X_n)\ge  1 \;\; i.o\}. $$
\eqref{eqlemBC2.3} holds.

(iii) Denote $d(\bm x,F)=\inf\{\|\bm y-\bm x\|: \bm y\in F\}$. Then $d(\bm x,F)$ is a Lipschitz function of $\bm x$. If $F_{n,j}$ is a closed set, then
$$ \bm X_n\in F_{n,j}\Longleftrightarrow d(\bm X_n, F_{n,j})=0\Longleftrightarrow f_{n,j}(\bm X_n)=:1-1\wedge d(\bm X_n, F_{n,j})\ge 1. $$
The results follow  from (i) and (ii) immediately.
\end{proof}

The next lemma is Lemma 3.3 of Wittmann\cite{wit85}.
\begin{lemma}\label{wit}
Let $\{a_n\}$ be a sequence with $a_n\nearrow\infty$, then for any $\lambda>1$, there exists a sequence $\{n_k\}$ with $n_k\nearrow\infty$ such that
$$
	\lambda a_{n_k}\leq a_{n_{k+1}}\leq \lambda^3 a_{n_k+1}.
$$
\end{lemma}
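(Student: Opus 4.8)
This is a purely deterministic statement about the real sequence $\{a_n\}$, so no probabilistic machinery enters; the plan is to build $\{n_k\}$ by a greedy recursion and read off the two inequalities directly from the construction. Fix $\lambda>1$. Having chosen $n_k$, I would take $n_{k+1}$ to be the largest index whose value lands in the window $[\lambda a_{n_k},\lambda^3 a_{n_k+1}]$, that is, $n_{k+1}=\max\{n:\lambda a_{n_k}\le a_n\le\lambda^3 a_{n_k+1}\}$. With this choice both required inequalities $\lambda a_{n_k}\le a_{n_{k+1}}\le\lambda^3 a_{n_k+1}$ hold by definition; moreover $a_{n_{k+1}}\ge\lambda a_{n_k}>a_{n_k}$ forces $n_{k+1}>n_k$, and since $a_n\nearrow\infty$ the set defining the maximum is finite and the recursion runs forever, provided the window is non-empty at each step.

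The only substantive point is therefore to show the window is never empty. I would split according to the first forward increment. If $a_{n_k+1}\ge\lambda a_{n_k}$, then $a_{n_k+1}$ itself lies in $[\lambda a_{n_k},\lambda^3 a_{n_k+1}]$ and we are done. If instead $a_{n_k+1}<\lambda a_{n_k}$, let $p=\min\{n>n_k:a_n\ge\lambda a_{n_k}\}$ be the first up-crossing of the level $\lambda a_{n_k}$, so $a_{p-1}<\lambda a_{n_k}$. When the increment at $p$ is moderate, namely $a_p<\lambda a_{p-1}$, the crossing value is controlled by $a_p<\lambda^2 a_{n_k}\le\lambda^3 a_{n_k+1}$, so $a_p$ sits inside the window; this is exactly where the two extra factors built into $\lambda^3$ (versus the single $\lambda$ of the lower bound) are used to absorb the overshoot of the crossing.

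The main obstacle is the remaining case, where the sequence makes a large jump (an increment by a factor $\ge\lambda$, possibly far larger than $\lambda^3$) that leaps over the whole window. The device for handling this is to make the subsequence land at the index immediately preceding such a jump: at that index $m$ the forward benchmark $a_{m+1}$ is the large post-jump value, so the window $[\lambda a_m,\lambda^3 a_{m+1}]$ is enormous and is certainly met, indeed by $a_{m+1}$ itself, which returns us to the easy case. The greedy rule achieves this automatically because, among the indices of a plateau or gradual run carrying a given value, the \emph{largest} one is the one whose benchmark already anticipates the coming jump; taking $n_{k+1}$ to be the rightmost index of the window thus routes $\{n_k\}$ onto these pre-jump indices. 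The finitely many exceptional indices near the very start, from which every window is empty, are avoided by choosing $n_0$ large enough. The delicate part of a full write-up is precisely the verification that this rightmost choice can never be driven into an empty-window position; I would formalise it by showing that a blocked step would force a blocked immediate predecessor, hence an infinite descending chain of blocked indices, contradicting the existence of a finite starting point.
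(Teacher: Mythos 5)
The paper never proves this lemma; it is imported verbatim as Lemma~3.3 of Wittmann (1985), so your proposal has to stand on its own --- and it does not. Both pillars of your construction fail on a concrete sequence. Take $\lambda=2$ (so $\lambda^3=8$) and define a nondecreasing sequence in blocks of five by
\[
\bigl(a_{5t+1},a_{5t+2},a_{5t+3},a_{5t+4},a_{5t+5}\bigr)=\bigl(1,\;1,\;7.9,\;7.95,\;15\bigr)\cdot 1000^{t},\qquad t=0,1,2,\dots
\]
The window of $5t+3$ is $[15.8,\,63.6]\cdot 1000^{t}$ and the window of $5t+4$ is $[15.9,\,120]\cdot 1000^{t}$; since the sequence jumps from $15\cdot 1000^{t}$ straight to $1000^{t+1}$, both windows are \emph{empty}, so these indices are dead ends under any selection rule. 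First failure: your infinite-descent argument (``a blocked step forces a blocked immediate predecessor'') is false, because the blocked index $5t+3$ has the unblocked predecessor $5t+2$, whose window $[2,\,63.2]\cdot 1000^{t}$ is nonempty. Second failure: the greedy rightmost rule dies from \emph{every} starting index, so no choice of a large $n_0$ can save it. Indeed, from $5t+1$ the rightmost admissible index is $5t+4$ (dead end); from $5t+2$ the rightmost is $5t+5$, and from $5t+5$ the window is $[30\cdot 1000^{t},\,8\cdot 1000^{t+1}]$, whose rightmost element is $5(t+1)+4$ because $7.95\le 8$ --- again a dead end. Thus every greedy chain terminates within two steps, while the lemma is true for this sequence: the chain $5t+2\to 5t+5\to 5(t+1)+2\to\cdots$ works, since $2\cdot 1\le 15\le 8\cdot 7.9$ and $2\cdot 15\cdot 1000^{t}\le 1000^{t+1}\le 8\cdot 1000^{t+1}$. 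Note also that $5t+1$ has a nonempty window yet admits no valid continuation at all (both elements of its window are dead ends), so the obstruction is not captured by ``empty window'' and propagates backwards in a way your argument cannot see.

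The surviving chain above consists exactly of the pre-jump indices $n$ with $a_{n+1}\ge\lambda a_n$, and this points to what a correct proof (essentially Wittmann's) must do; your intuition that such indices are always admissible is right, but greediness does not route you onto them. One correct organization: if from some point on $a_{n+1}<\lambda a_n$ for all $n$, take \emph{first} (not last) up-crossings, $n_{k+1}=\min\{n:a_n\ge\lambda a_{n_k}\}$; then minimality and the no-jump hypothesis give $a_{n_{k+1}}<\lambda a_{n_{k+1}-1}<\lambda^{2}a_{n_k}\le\lambda^{3}a_{n_k+1}$. Otherwise there are infinitely many pre-jump indices, and the subsequence is forced to pass through all of them: from a pre-jump index $p$ with successor pre-jump index $p'$, either $a_{p'}<\lambda a_{p+1}$ and one steps directly $p\to p'$ (upper bound trivially, lower bound since $a_{p'}\ge a_{p+1}\ge\lambda a_p$), or one steps $p\to p+1$ and then inserts first up-crossings $m_{j+1}=\min\{n:a_n\ge\lambda a_{m_j}\}$, replacing the first $m_j$ that would overshoot $a_{p'}$ by $p'$ itself; the absence of jumps strictly between $p$ and $p'$ yields $a_{p'}<\lambda a_{m_{j}}<\lambda^{3}a_{m_{j-1}}\le\lambda^{3}a_{m_{j-1}+1}$. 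This two-regime bookkeeping, with the patch at the right end of each slow-growth stretch, is the missing idea; it cannot be replaced by a one-line greedy recursion.
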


The last lemma is about the properties for random variables with $C_{\hV}(|X|)<\infty$, which is Lemma 4.1 of Zhang\cite{zhang23b}.
\begin{lemma} \label{lemma3.5}
	Suppose $X\in\mathcal{H}$ with $C_{\hV}(|X|)<\infty$. Then
$$
	\sum_{i=1}^{\infty}\hV\left(|X|\geq Mi\right)<\infty,\enspace\text{ for all } M>0 \text{ or equivalently for some } M>0,
$$
and
$$
	\bE[(|X|-c)^+]=o(1),
$$
as $c\rightarrow\infty$.
\end{lemma}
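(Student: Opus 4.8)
The plan is to handle the two assertions separately, both resting on the integral representation $C_{\hV}(|X|)=\int_0^{\infty}\hV(|X|\ge t)\dif t$, which holds because $|X|\ge 0$ forces the negative part of the Choquet integral to vanish.

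For the first assertion I would use only that $g(t):=\hV(|X|\ge t)$ is non-increasing in $t$, so that a comparison of the integral with the upper and lower Riemann sums over the grid $\{Mi\}_{i\ge 0}$ gives
$$M\sum_{i=1}^{\infty}\hV(|X|\ge Mi)\le \int_0^{\infty}\hV(|X|\ge t)\dif t\le M\Big(1+\sum_{i=1}^{\infty}\hV(|X|\ge Mi)\Big).$$
The left inequality shows that $C_{\hV}(|X|)<\infty$ implies finiteness of the series for every $M>0$, while the right inequality shows that finiteness of the series for some $M>0$ implies $C_{\hV}(|X|)<\infty$; together they give the stated equivalence and the ``for all $M$'' conclusion at once.

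For the second assertion the crucial step is the auxiliary bound $\mbE[Z]\le C_{\hV}(Z)$ for every bounded $Z\ge 0$ in $\mathcal H$. I would prove this by discretizing the layer-cake inequality $Z\le \frac{a}{n}\sum_{k=1}^{n}I\{Z>(k-1)a/n\}$, where $a$ is an upper bound for $Z$. Since the indicators need not belong to $\mathcal H$, for each event $A_k=\{Z>(k-1)a/n\}$ and each $\epsilon>0$ I would invoke the definition of $\hV$ to pick $\xi_k\in\mathcal H$ with $I_{A_k}\le\xi_k$ and $\mbE[\xi_k]\le\hV(A_k)+\epsilon$; then $Z\le\frac{a}{n}\sum_k\xi_k$, and sub-additivity together with $\epsilon\to 0$ yields $\mbE[Z]\le\frac{a}{n}\sum_{k=1}^{n}\hV(Z>(k-1)a/n)$. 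Letting $n\to\infty$, the right-hand side converges to $\int_0^{a}\hV(Z\ge t)\dif t=C_{\hV}(Z)$, using that $\hV(Z>t)$ and $\hV(Z\ge t)$ differ only on a countable set and are Riemann integrable by monotonicity.

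It then remains to apply the auxiliary bound to $Z=(|X|-c)^+\wedge a$, which lies in $\mathcal H$ because the map $x\mapsto(|x|-c)^+\wedge a$ is Lipschitz. This gives $\mbE[(|X|-c)^+\wedge a]\le C_{\hV}\big((|X|-c)^+\big)=\int_c^{\infty}\hV(|X|\ge s)\dif s$, and letting $a\to\infty$ produces $\bE[(|X|-c)^+]\le\int_c^{\infty}\hV(|X|\ge s)\dif s$, whose right-hand side is the tail of the convergent integral $C_{\hV}(|X|)$ and hence is $o(1)$ as $c\to\infty$. The main obstacle is exactly the auxiliary bound $\mbE[Z]\le C_{\hV}(Z)$: because indicator functions are generally outside $\mathcal H$, one cannot apply $\mbE$ to the layer-cake decomposition directly and must instead majorize each indicator by an $\mathcal H$-function nearly attaining the infimum defining $\hV$, so that sub-additivity of $\mbE$ can be brought to bear.
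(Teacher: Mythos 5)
Your proposal is correct. Note, however, that the paper itself gives no proof of this lemma at all: it is quoted verbatim as Lemma 4.1 of Zhang \cite{zhang23b}, so there is no in-paper argument to compare against. What you have produced is a self-contained proof, and every step checks out: the Riemann-sum comparison $M\sum_{i\ge 1}\hV(|X|\ge Mi)\le \int_0^\infty \hV(|X|\ge t)\dif t\le M\bigl(1+\sum_{i\ge 1}\hV(|X|\ge Mi)\bigr)$ is exactly what the monotonicity of $t\mapsto \hV(|X|\ge t)$ gives, and it yields both directions of the stated equivalence. Your treatment of the second claim correctly isolates the only genuinely nontrivial point, namely that $\mbE[Z]\le C_{\hV}(Z)$ for bounded $Z\ge 0$ in $\mathcal H$ cannot be obtained by applying $\mbE$ to the layer-cake decomposition directly (indicators need not lie in $\mathcal H$); replacing each indicator $I_{A_k}$ by an $\epsilon$-almost-optimal majorant $\xi_k\in\mathcal H$ from the definition $\hV(A)=\inf\{\mbE[\xi]:I_A\le\xi,\ \xi\in\mathcal H\}$ and then using sub-additivity is the right fix, and the passage $n\to\infty$ is justified since monotone functions are Riemann integrable (in fact the one-sided inequality $\int_0^a\hV(Z>t)\dif t\le\int_0^a\hV(Z\ge t)\dif t$ already suffices, so your countable-discontinuity remark is more than is needed). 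The final step, $\bE[(|X|-c)^+]\le\int_c^\infty\hV(|X|\ge s)\dif s\to 0$, is clean because the truncations $(|X|-c)^+\wedge a$ increase in $a$ and the bound is uniform in $a$. The benefit of your route is that it makes the present paper independent of the external reference; it also makes visible that only finite (not countable) sub-additivity of $\hV$ and the defining infimum of $\hV$ are used.
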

The next lemma about $\bE$ is Proposition 1.1 of Zhang \cite{zhang23b}.
\begin{lemma}\label{lemma3.6} Consider a subspace of $\mathcal{H}$ as
\begin{equation}\label{eqlem3.6.1} \mathcal{H}_1=\big\{X\in\mathcal{H}: \lim_{c,d\to \infty}\mbE\big[(|X|\wedge d-c)^+\big]=0\big\}.
\end{equation}
Then for any $X\in\mathcal{H}_1$, $\bE[X]$ is well defined, and $(\Omega,\mathcal{H}_1,\bE)$ is a sub-linear expectation space.
\end{lemma}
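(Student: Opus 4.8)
The plan is to split the claim into two parts: the existence of the limit defining $\bE[X]$, and the verification that $\bE$ obeys the four sub-linear-expectation axioms on $\mathcal{H}_1$.

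First I would establish well-definedness. Fixing $X\in\mathcal{H}_1$, the key input is the pointwise identity $|X^{(d)}-X^{(c)}|=(|X|\wedge d-c)^+$ for $0<c<d$, checked by treating the regions $|X|\le c$, $c<|X|\le d$, $|X|>d$ separately. Since $\mbE$ is monotone and sub-additive one has $|\mbE[A]-\mbE[B]|\le\mbE[|A-B|]$ for bounded $A,B$, so
\[
|\mbE[X^{(d)}]-\mbE[X^{(c)}]|\le\mbE\big[(|X|\wedge d-c)^+\big].
\]
By the defining property of $\mathcal{H}_1$ the right-hand side vanishes as $c,d\to\infty$, so the family $\{\mbE[X^{(c)}]\}$ satisfies the Cauchy criterion as $c\to\infty$ and converges to a finite limit $\bE[X]$.

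Next, to see that $\mathcal{H}_1$ is an admissible domain, I would check it is a linear space containing the constants and closed under $|\cdot|$ and globally Lipschitz maps. This rests on two elementary inequalities for nonnegative reals, $(a+b)\wedge d\le a\wedge d+b\wedge d$ and $(A+B-c)^+\le(A-c/2)^+ +(B-c/2)^+$. Indeed $|X+Y|\wedge d\le|X|\wedge d+|Y|\wedge d$ yields
\[
\mbE\big[(|X+Y|\wedge d-c)^+\big]\le\mbE\big[(|X|\wedge d-c/2)^+\big]+\mbE\big[(|Y|\wedge d-c/2)^+\big]\to0,
\]
so $X+Y\in\mathcal{H}_1$; closure under scalar multiples and constants is immediate from positive homogeneity of $\mbE$ and boundedness of constants, and for globally Lipschitz $\varphi$ one dominates $|\varphi(\bm X)|$ by an affine function of $\sum_i|X_i|\in\mathcal{H}_1$. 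I would remark that $\mathcal{H}_1$ is \emph{not} closed under the full polynomial-growth class $C_{l,Lip}$, so ``sub-linear expectation space'' is to be read with closure under Lipschitz test functions, which is all that the statement and its later use require. Monotonicity, constant preservation and positive homogeneity of $\bE$ then pass to the limit directly from those of $\mbE$ on the truncations, via $X\ge Y\Rightarrow X^{(c)}\ge Y^{(c)}$, $c_0^{(c)}=c_0$ for $c>|c_0|$, and $(\lambda X)^{(c)}=\lambda X^{(c/\lambda)}$ for $\lambda>0$.

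The main obstacle is sub-additivity, $\bE[X+Y]\le\bE[X]+\bE[Y]$. The difficulty is that $(X+Y)^{(c)}\ne X^{(c)}+Y^{(c)}$, and the naive correction $(|X|-c)^+$ need not have vanishing—or even finite—upper expectation, so no monotone-convergence argument is available. My device is to truncate the sum at a strictly higher level than the summands: I compare $(X+Y)^{(2N)}$ with $X^{(N)}+Y^{(N)}$. Writing $g_{2N}(z)=(-2N)\vee z\wedge 2N$, which is $1$-Lipschitz and nondecreasing with $g_{2N}(X^{(N)}+Y^{(N)})=X^{(N)}+Y^{(N)}$ since $|X^{(N)}+Y^{(N)}|\le 2N$, and combining the Lipschitz bound with $|g_{2N}(X+Y)|\le 2N$, I obtain
\[
\big|(X+Y)^{(2N)}-(X^{(N)}+Y^{(N)})\big|\le\big((|X|-N)^+\wedge 4N\big)+\big((|Y|-N)^+\wedge 4N\big)=(|X|\wedge 5N-N)^+ +(|Y|\wedge 5N-N)^+ .
\]
The crucial point is that capping the correction at $4N$ converts the unbounded $(|X|-N)^+$ into the doubly-truncated $(|X|\wedge 5N-N)^+$, whose upper expectation tends to $0$ by the $\mathcal{H}_1$ hypothesis. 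Hence $\bE[X+Y]=\lim_N\mbE[X^{(N)}+Y^{(N)}]$, and finishing with sub-additivity of $\mbE$ gives $\bE[X+Y]\le\lim_N\big(\mbE[X^{(N)}]+\mbE[Y^{(N)}]\big)=\bE[X]+\bE[Y]$. This matched-level truncation, keeping every correction term bounded and governed by the defining condition of $\mathcal{H}_1$, is the heart of the argument.
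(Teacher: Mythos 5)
Your proposal is correct, but there is nothing in the paper to compare it against line by line: the paper does not prove Lemma \ref{lemma3.6} at all, it simply quotes it as Proposition 1.1 of Zhang \cite{zhang23b}. What you have written is therefore a self-contained substitute for that citation, and all of its steps check out: the pointwise identity $|X^{(d)}-X^{(c)}|=(|X|\wedge d-c)^+$ gives the Cauchy property of $\mbE[X^{(c)}]$, hence well-definedness of $\bE[X]$; closure of $\mathcal{H}_1$ under sums follows from $(|X+Y|\wedge d-c)^+\le(|X|\wedge d-c/2)^++(|Y|\wedge d-c/2)^+$; and your capped comparison $\big|(X+Y)^{(2N)}-(X^{(N)}+Y^{(N)})\big|\le(|X|\wedge 5N-N)^++(|Y|\wedge 5N-N)^+$ is exactly the right estimate for sub-additivity, since the defining condition of $\mathcal{H}_1$ controls doubly truncated tails and nothing more. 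Your caveat about $C_{l,Lip}$-closure is also well taken: $\mathcal{H}_1$ need not be stable under polynomial-growth test functions, so the conclusion should be read as asserting properties (a)--(d) of a sub-linear expectation on the linear space $\mathcal{H}_1$, which is all the paper ever uses. Finally, it is worth pointing out that your matched-level capping device is not foreign to this paper: it is precisely the trick used in the proof of Lemma \ref{lemma3.7}, where $\big|(X+Y)^{(b)}-(X^{(b)}+Y^{(b)})\big|$ is capped at $3b$ and dominated by $2\big((|X|\wedge(3b)-b/2)^++(|Y|\wedge(3b)-b/2)^+\big)$; your argument shows that the same estimate (with truncation levels $2N$ and $N$ and cap $4N$) yields sub-additivity of $\bE$ on $\mathcal{H}_1$ with no independence assumption, so the citation to Zhang \cite{zhang23b} could in principle be replaced by roughly the same amount of work the paper already spends on Lemma \ref{lemma3.7}.
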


\begin{lemma}\label{lemma3.7}  Suppose $X,Y\in \mathcal{H}_1$ are independent under the sub-liner expectation $\mbE$. Then
\begin{equation}\label{eqlem3.7.1} \bE[X+Y]=\bE[X]+\bE[Y].
\end{equation}
\end{lemma}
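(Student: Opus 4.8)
The plan is to reduce the claim to the additivity of $\mbE$ on \emph{bounded} independent random variables (the elementary fact recorded just after the definition of independence) via truncation, and then to control the truncation error using only the defining property of $\mathcal{H}_1$. The building block is that if $U,V$ are bounded and $V$ is independent of $U$, then $\mbE[U+V]=\mbE[U]+\mbE[V]$. I would apply this to $U=X^{(c)}$ and $V=Y^{(c)}$: these are bounded, and they remain independent because truncation $x\mapsto(-c)\vee x\wedge c$ is a Lipschitz transformation, so independence is preserved under it. This yields $\mbE[X^{(c)}+Y^{(c)}]=\mbE[X^{(c)}]+\mbE[Y^{(c)}]$ for every $c>0$, and letting $c\to\infty$ the right-hand side converges to $\bE[X]+\bE[Y]$ by the definition of $\bE$.

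It then remains to identify $\lim_{c\to\infty}\mbE[X^{(c)}+Y^{(c)}]$ with $\bE[X+Y]$. Since $X+Y\in\mathcal{H}_1$ by Lemma \ref{lemma3.6}, $\bE[X+Y]=\lim_{c\to\infty}\mbE[(X+Y)^{(2c)}]$ (a subsequence of the defining limit, taken at level $2c$ to match the scale of $X^{(c)}+Y^{(c)}$, which is bounded by $2c$). Using the elementary inequality $|\mbE[A]-\mbE[B]|\le\mbE[|A-B|]$, which follows from sub-additivity, it suffices to show that $\mbE\big[\,|(X+Y)^{(2c)}-(X^{(c)}+Y^{(c)})|\,\big]\to 0$ as $c\to\infty$. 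A pointwise estimate, splitting through $X+Y$ and using $|X-X^{(c)}|=(|X|-c)^+$ together with $(|X+Y|-2c)^+\le(|X|+|Y|-2c)^+\le(|X|-c)^++(|Y|-c)^+$, gives the bound $|(X+Y)^{(2c)}-(X^{(c)}+Y^{(c)})|\le 2(|X|-c)^++2(|Y|-c)^+$.

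The main obstacle is that this bound cannot be integrated directly. Membership $X\in\mathcal{H}_1$ only controls the \emph{doubly}-truncated expectations $\mbE[(|X|\wedge d-c)^+]$, and because $\mbE$ need not be continuous from below, there is no reason for $\mbE[(|X|-c)^+]$ to tend to $0$ (indeed it may be $+\infty$). The fix is to combine the tail bound with the trivial bound $|(X+Y)^{(2c)}-(X^{(c)}+Y^{(c)})|\le 4c$. Using $\min(a+b,M)\le\min(a,M)+\min(b,M)$ and the identity $(|X|-c)^+\wedge 2c=(|X|\wedge 3c-c)^+$, one upgrades the estimate to the doubly-truncated form
\begin{equation*}
\mbE\big[\,|(X+Y)^{(2c)}-(X^{(c)}+Y^{(c)})|\,\big]\le 2\,\mbE\big[(|X|\wedge 3c-c)^+\big]+2\,\mbE\big[(|Y|\wedge 3c-c)^+\big].
\end{equation*}
As $c\to\infty$ both the inner level $3c$ and the cutoff $c$ tend to infinity, so the right-hand side vanishes precisely by the definition of $\mathcal{H}_1$ applied to $X$ and $Y$.

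Chaining the three facts — the bounded-case additivity at each level $c$, the convergence $\mbE[X^{(c)}]+\mbE[Y^{(c)}]\to\bE[X]+\bE[Y]$, and the vanishing of the error $\mbE[(X+Y)^{(2c)}]-\mbE[X^{(c)}+Y^{(c)}]$ — then gives $\bE[X+Y]=\lim_{c\to\infty}\mbE[(X+Y)^{(2c)}]=\lim_{c\to\infty}\mbE[X^{(c)}+Y^{(c)}]=\bE[X]+\bE[Y]$, which is \eqref{eqlem3.7.1}.
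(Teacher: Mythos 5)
Your proposal is correct and takes essentially the same approach as the paper: truncate, use independence to get $\mbE[X^{(c)}+Y^{(c)}]=\mbE[X^{(c)}]+\mbE[Y^{(c)}]$, and bound $\mbE\big[\,|(X+Y)^{(2c)}-(X^{(c)}+Y^{(c)})|\,\big]$ by doubly-truncated tail expectations of the form $\mbE[(|X|\wedge 3c-c)^+]$, which vanish by the definition of $\mathcal{H}_1$. The only (cosmetic) difference is that you truncate the sum at level $2c$ and the summands at level $c$, whereas the paper truncates everything at a single level $b$ and puts the tails at $b/2$.
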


\begin{proof}[\bf{Proof}]
Note $|X^{(b)}-X|\le (|X|-b)^+$. It follows that
\begin{align*}
 \left|(X+Y)^{(b)}-(X^{(b)}+Y^{(b)})\right|\le & \left(\left|(X+Y)^{(b)}-(X+Y)\right|+|X^{(b)}-X|+|Y^{(b)}-Y|\right)\wedge (3b) \\
\le &  2 \cdot \Big((|X|-b/2)^++(|Y|-b/2)^+\Big)\wedge (3b).
 \end{align*}
 Hence,
 \begin{align*}
 \left|\mbE[(X+Y)^{(b)}]-\mbE[X^{(b)}+Y^{(b)}]\right|\le    2  \Big(\mbE[(|X|\wedge(3b)-b/2)^+]+\mbE[(|Y|\wedge(3b)-b/2)^+]\Big) \to 0,
 \end{align*}
 as $b\to \infty$. On the other hand, $\mbE[X^{(b)}+Y^{(b)}]=\mbE[X^{(b)}]+\mbE[Y^{(b)}] $ by the independence. Hence, \eqref{eqlem3.7.1} holds.
\end{proof}

\section{Proofs of the theorems}\label{sec:proof}
In this section, we prove the theorems in Section \ref{sec:result}.
\begin{proof}[\bf{Proof of Theorem \ref{th1}}]
	From (\ref{eqth1.1}) there exists a sequence $\epsilon_k\searrow0$ such that
$$
	\sum_{n=1}^\infty \frac{\mbE[X_n^2]}{\epsilon_n^2a_n^2}<\infty.
$$
By Lemma \ref{wit}, for all $\lambda>1$, there exists a sequence $n_k\nearrow\infty$ such that
\begin{equation}
	\lambda a_{n_k}\leq a_{n_{k+1}}\leq \lambda^3 a_{n_k+1}.\label{eq5.1}
\end{equation}
Then it follows from Kolmogorov's maximal inequality \eqref{kolV} that
$$
	\hV\left(\max_{n_k+1\leq n\leq n_{k+1}}(S_n-S_{n_k}-\mbE[S_n-S_{n_k}])\geq \epsilon_{n_k}a_{n_{k+1}}\right)\leq C\frac{\sum_{j=n_k+1}^{n_{k+1}}\mbE[X_j^2]}{\epsilon_{n_k}^2a_{n_{k+1}}^2}\leq C\sum_{j=n_k+1}^{n_{k+1}}\frac{\mbE[X_j^2]}{\epsilon_j^2a_j^2}.
$$
Hence
$$
\sum_{k=1}^\infty\hV\left(\max_{n_k+1\leq n\leq n_{k+1}}(S_n-S_{n_k}-\mbE[S_n-S_{n_k}])\geq \epsilon_{n_k}a_{n_{k+1}}\right)<\infty.
$$
By noting the countable sub-additivity of $\Vstar$ and Lemma \ref{lemBC1}, we have
$$
\Vstar\left(\max_{n_k+1\leq n\leq n_{k+1}}(S_n-S_{n_k}-\mbE[S_n-S_{n_k}])\geq \epsilon_{n_k}a_{n_{k+1}},i.o.\right)=0.
$$
Denote
$$
\Lambda_0=\left\{\max_{n_k+1\leq n\leq n_{k+1}}(S_n-S_{n_k}-\mbE[S_n-S_{n_k}])\geq \epsilon_{n_k}a_{n_{k+1}},i.o.\right\}^c.
$$
On $\Lambda_0$, there exists a positive integer $k_0$ such that for $k\geq k_0$, $\max_{n_k+1\leq n\leq n_{k+1}}(S_n-S_{n_k}-\mbE[S_n-S_{n_k}])\leq \epsilon_{n_k}a_{n_{k+1}}$. For $n\geq n_{k_0+1}$, there exitst a $k\geq k_0$ such that $n_k+1\leq n\leq n_{k+1}$, then from  \eqref{eq5.1},
\begin{align*}
	S_n-\mbE[S_n] =&S_n-S_{n_k}+\sum_{i=k_0+1}^{k}(S_{n_i}-S_{n_{i-1}}-\mbE[S_{n_i}-S_{n_{i-1}}])+S_{n_{k_0}}-\mbE[S_{n_{k_0}}]\\
 \leq&\epsilon_{n_k}a_{n_{k+1}}+\epsilon_{n_{k-1}}a_{n_k}+\cdots+\epsilon_{n_{k_0}-1}a_{n_{k_0}}+S_{n_{k_0}}-\mbE[S_{n_{k_0}}]\\
 \leq&\epsilon_{n_{k_0}-1}a_{n_{k+1}}(1+\frac{1}{\lambda}+\cdots+\frac{1}{\lambda^{k-k_0+1}})+S_{n_{k_0}}-\mbE[S_{n_{k_0}}]\\
 \leq&\epsilon_{n_{k_0}-1}a_{n_{k+1}}\frac{\lambda}{\lambda-1}+S_{n_{k_0}}-\mbE[S_{n_{k_0}}].
\end{align*}
And by \eqref{eq5.1} again,
\begin{align*}
\frac{S_n-\mbE[S_n]}{a_n} \leq&\epsilon_{n_{k_0}-1}\frac{a_{n_{k+1}}}{a_n}\cdot\frac{\lambda}{\lambda-1}+\frac{S_{n_{k_0}}-\mbE[S_{n_{k_0}}]}{a_n}\\
 \leq&\epsilon_{n_{k_0}-1}\frac{\lambda^4}{\lambda-1}+\frac{S_{n_{k_0}}-\mbE[S_{n_{k_0}}]}{a_n}.
\end{align*}
So
$$
\limsup_{n\rightarrow\infty}\frac{S_n-\mbE[S_n]}{a_n}\leq\epsilon_{n_{k_0}-1}\frac{\lambda^4}{\lambda-1}\rightarrow0,
$$
as $k_0\rightarrow\infty$, which means $\left\{\limsup_{n\rightarrow\infty}\frac{S_n-\mbE[S_n]}{a_n}>0\right\}\subset\Lambda_0^c$. It follows that
$$
\Vstar\left(\limsup_{n\rightarrow\infty}\frac{S_n-\mbE[S_n]}{a_n}>0\right)=0.
$$
On the other hand,
$$
\Vstar\left(\liminf_{n\rightarrow\infty}\frac{S_n-\mbe[S_n]}{a_n}<0\right)=\Vstar\left(\limsup_{n\rightarrow\infty}\frac{-S_n-\mbE[-S_n]}{a_n}>0\right)=0.
$$
The proof is completed.
\end{proof}

\par Having established Theorem \ref{th1}, we now turn our attention to proving the existence of some probability measure such that strong law of large numbers holds under the condition (CC).

\begin{proof}[\bf{Proof of Theorem \ref{th2}}]
We first show  \eqref{eqth2.1}. Denote $S_{m,n}=\sum_{i=m+1}^nX_i$ for $m<n$. By Kolmogorov's maximal inequality  \eqref{kolv}  and Kronecker lemma, we have
\begin{align*}	&  \mhV\left(\frac{S_{m,n}-\mbE[S_{m,n}]}{a_n}\leq-\epsilon\right)
= \mhV\left(\frac{-S_{m,n}-\mbe[-S_{m,n}]}{a_n}\geq\epsilon\right)\\
& \quad \leq   C\frac{\sum_{i=m+1}^n\mbE[X_i^2]}{\epsilon^2a_n^2}\leq C\frac{\sum_{i=1}^n\mbE[X_i^2]}{a_n^2}\rightarrow0.
\end{align*}
Similarly,
$$
	\mhV\left(\frac{-S_{m,n}-\mbE[-S_{m,n}]}{a_n}\leq-\epsilon\right)\rightarrow0.
$$
Set $\epsilon_k=2^{-k}$, then there exist $n_k\nearrow\infty$ such that $\frac{a_{n_{k+1}}}{a_{n_k}}\rightarrow\infty$, $\frac{\mbE[|S_{n_{k-1}}|]}{a_{n_k}}\to 0$ and
\begin{eqnarray*}
	\mhV\left(\frac{S_{n_{k-1},n_k}-\mbE[S_{n_{k-1},n_k}]}{a_{n_k}}<-\epsilon_k\right)\leq\epsilon_k,\\
	\mhV\left(\frac{-S_{n_{k-1},n_k}-\mbE[-S_{n_{k-1},n_k}]}{a_{n_k}}<-\epsilon_k\right)\leq\epsilon_k.
\end{eqnarray*}
Hence
\begin{eqnarray*}
	\sum_{k=1}^{\infty}\V\left(\frac{S_{n_{k-1},n_k}-\mbE[S_{n_{k-1},n_k}]}{a_{n_k}}\geq-\epsilon_k\right)=\infty,\\
	\sum_{k=1}^{\infty}\V\left(\frac{-S_{n_{k-1},n_k}-\mbE[-S_{n_{k-1},n_k}]}{a_{n_k}}\geq -\epsilon_k\right)=\infty.
\end{eqnarray*}
By Lemma \ref{lemBC2} (iii) and noting the independence,  there exists a probability measure $P\in \mathcal{P}$ such that
$$ P(\Omega_0)=1 $$
 where 
  $$ \Omega_0=\left\{ \frac{S_{n_{k-1},n_k}-\mbE[S_{n_{k-1},n_k}]}{a_{n_k}}\geq-\epsilon_k \; i.o.\right\}\bigcap \left\{\frac{-S_{n_{k-1},n_k}-\mbE[-S_{n_{k-1},n_k}]}{a_{n_k}}\geq -\epsilon_k\; i.o.\right\}. $$
  On the event $\Omega_0$, we have  
$$
	\limsup_{n\rightarrow\infty}\frac{S_{n_{k-1},n_k}-\mbE[S_{n_{k-1},n_k}]}{a_{n_k}}\geq 0 
$$
and
$$
	\liminf_{n\rightarrow\infty}\frac{S_{n_{k-1},n_k}-\mbe[S_{n_{k-1},n_k}]}{a_{n_k}}\leq0.
$$
Further, if $\limsup_{n\rightarrow\infty}\frac{S_n-\mbE[S_n]}{a_n}\leq0$ and $\liminf_{n\rightarrow\infty}\frac{S_n-\mbe[S_n]}{a_n}\geq0$, we have
\begin{align*}
	0\geq & \limsup_{n\rightarrow\infty}\frac{S_n-\mbE[S_n]}{a_n}\geq \limsup_{n\rightarrow\infty}\frac{S_{n_{k}}-\mbE[S_{n_{k}}]}{a_{n_k}}\\
\geq& \limsup_{k\rightarrow\infty}\frac{S_{n_{k-1},n_k}-\mbE[S_{n_{k-1},n_k}]}{a_{n_k}}
+\liminf_{k\rightarrow\infty}\frac{S_{n_{k-1}}-\mbe[S_{n_{k-1}}]}{a_{n_k}}+\liminf_{k\rightarrow\infty}\frac{ \mbe[S_{n_{k-1}}]-\mbE[S_{n_{k-1}}]}{a_{n_k}}\\
\geq& 0  ,\\
	0\leq &\liminf_{n\rightarrow\infty}\frac{S_n-\mbe[S_n]}{a_n}\leq\liminf_{n\rightarrow\infty}\frac{S_{n_k}-\mbe[S_{n_k}]}{a_{n_k}}\\
\leq &  \liminf_{n\rightarrow\infty}\frac{S_{n_{k-1},n_k}-\mbe[S_{n_{k-1},n_k}]}{a_{n_k}} +
+\limsup_{k\rightarrow\infty}\frac{S_{n_{k-1}}-\mbE[S_{n_{k-1}}]}{a_{n_k}}+\limsup_{k\rightarrow\infty}\frac{ \mbE[S_{n_{k-1}}]-\mbe[S_{n_{k-1}}]}{a_{n_k}}\\
\leq &0.
\end{align*}
Hence by Theorem \ref{th1}, it follows that
$$
	P\left(\limsup_{n\rightarrow\infty}\frac{S_n-\mbE[S_n]}{a_n}=0\enspace and\enspace\liminf_{n\rightarrow\infty}\frac{S_n-\mbe[S_n]}{a_n}=0\right)=1.
$$
\par Now we show  \eqref{eqth2.2}. Let $\{\epsilon_n\},\{n_k\}$ be the same as in Theorem \ref{th1}. By Lemma \ref{lemma2.7}, it follows that
$$
	\mhV\left(\max_{n_k+1\leq n\leq n_{k+1}}\left|\sum_{i=n_k+1}^n(X_i-\mu_i)\right|\geq\epsilon_{n_k}a_{n_{k+1}}\right)\leq C\frac{\sum_{j=n_k+1}^{n_{k+1}}\mbE[X_j^2]}{\epsilon_{n_k}^2a_{n_{k+1}}^2}\leq C \sum_{j=n_k+1}^{n_{k+1}}\frac{\mbE[X_j^2]}{\epsilon_j^2a_j^2}.
$$
Hence
$$
	 \sum_{k=1}^\infty \mhV\left(\max_{n_k+1\leq n\leq n_{k+1}}\left|\sum_{i=n_k+1}^n(X_i-\mu_i)\right|\geq\epsilon_{n_k}a_{n_{k+1}}\right)\leq C \sum_{j=1}^{\infty}\frac{\mbE[X_j^2]}{\epsilon_j^2a_j^2}<\infty.
$$
Note the independence, by Lemma \ref{lemBC2} (iii), there exists a probability measure $P\in\mathcal{P}$ such that
$$P\left(\max_{n_k+1\leq n\leq n_{k+1}}\left|\sum_{i=n_k+1}^n(X_i-\mu_i)\right|\geq\epsilon_{n_k}a_{n_{k+1}}\;\; i.o.\right)=0. $$
It is easily seen that on the event $\left\{\max_{n_k+1\leq n\leq n_{k+1}}\left|\sum_{i=n_k+1}^n(X_i-\mu_i)\right|\geq\epsilon_{n_k}a_{n_{k+1}}\;\; i.o.\right\}^c$, we have
$$
\lim_{n\rightarrow\infty}\frac{\sum_{i=1}^n(X_i-\mu_i)}{a_n}=0.
$$
Then the proof is completed.
\end{proof}

\par With the prelimiary results presented above, we now start to prove strong law of large numbers for $m$-dependent and stationary random variables as an application.

\begin{proof}[\bf{Proof of Theorem \ref{th3}}]
	We prove the theorem in five steps.

	\par \textbf{Step 1.} We show that, if $\{X_n;n\geq 1\}$ is a sequence of $m$-dependent and identically distributed random variables   with \eqref{eqth3.1}, then
\begin{equation}
	\Vstar\left(\limsup_{n\rightarrow\infty}\frac{S_n-\bE [S_n]}{n}>0\enspace or\enspace\liminf_{n\rightarrow\infty}\frac{S_n-\be [S_n]}{n}<0\right)=0.\label{eq4.7ad}
\end{equation}

 Let $Y_i=X_i^{(i)}$, then
\begin{align}
	\sum_{i=1}^\infty\frac{\mbE[Y_i^2]}{i^2}
 \leq&\sum_{i=1}^\infty\frac{C_{\hV}(Y_i^2)}{i^2}\leq\sum_{i=1}^\infty\frac{1}{i^2}\int_0^{i^2}\hV(X_1^2\geq y)\dif y=2\sum_{i=1}^\infty\frac{1}{i^2}\int_0^{i}y\hV(|X_1|\geq y)\dif y\nonumber\\
	 \leq& 2\sum_{i=1}^\infty\int_{i}^{i+1}\left(\frac{i+1}{i}\right)^2\frac{1}{x^2}\int_0^{x}y\hV(|X_1|\geq y)\dif y\dif x\nonumber\\
	 \leq& 8\int_1^\infty\frac{\dif x}{x^2}\int_0^{x}y\hV(|X_1|\geq y)\dif y
	 \leq  8\int_{0}^\infty y\hV(|X_1|\geq y)\dif y\int_y^\infty\frac{\dif x}{x^2}\nonumber\\
	 \leq& 8\int_{0}^\infty \hV(|X_1|\geq y)\dif y=8 C_{\hV}(|X_1|)<\infty.
\end{align}
There exists a sequence $M_i\nearrow\infty$ such that $\sum_{i=1}^\infty\frac{M_i\mbE[Y_i^2]}{i^2}<\infty$. Next, we will use the technique of segmenting summation to piecewise summations of  large and small blocks.  Define $a_0=0,a_n=a_{n-1}+l_n$, where $l_n$ is chosen such that $l_n\nearrow\infty$
\begin{equation}
	l_n\leq M_{a_{n-1}+1}^{1/4},\enspace l_n\leq n^{1/4}.\label{l}
\end{equation}
Denote
$$
	Z_n=\sum_{i=a_{n-1}+1}^{a_n-m}Y_i,\enspace W_n=\sum_{i=a_{n}-m+1}^{a_n}Y_i.
$$
Then $\{Z_n;n\ge 1\}$ and $\{W_n;n\ge 1\}$ are    sequences of independent random variables, respectively. The former is the main part we want to investigate, while the latter we need to prove that it can be ignored.  Noting the construction of $\{l_n\}$, we have
\begin{equation}\label{eqZ}
	\sum_{n=1}^\infty\frac{\mbE[Z_n^2]}{a_n^2}\leq \sum_{n=1}^\infty\frac{(l_n-m)\sum_{i=a_{n-1}+1}^{a_n}\mbE[Y_i^2]}{a_n^2}\leq\sum_{n=1}^\infty\sum_{i=a_{n-1}+1}^{a_n-m}\frac{M_i\mbE[Y_i^2]}{i^2}
=\sum_{i=1}^\infty\frac{M_i\mbE[Y_i^2]}{i^2}<\infty.
\end{equation}
Similarly,
\begin{equation}\label{eqW}
	\sum_{n=1}^\infty\frac{\mbE[W_n^2]}{a_n^2}\leq \sum_{n=1}^\infty\frac{m\sum_{i=a_{n}-m+1}^{a_n}\mbE[Y_i^2]}{a_n^2}\leq m\sum_{i=1}^\infty\frac{\mbE[Y_i^2]}{i^2}<\infty.
\end{equation}
By  \eqref{eqW}  and Theorem \ref{th1}, it follows that
\begin{equation}\label{eqlimitW}
	\Vstar\left(\limsup_{n\rightarrow\infty}\frac{\sum_{i=1}^nW_i-\mbE\left[\sum_{i=1}^nW_i\right]}{a_n}>0\enspace or\enspace\liminf_{n\rightarrow\infty}\frac{\sum_{i=1}^nW_i-\mbe[\sum_{i=1}^nW_i]}{a_n}<0\right)=0.
\end{equation}
By  \eqref{eqZ}  and Theorem \ref{th1}, we have
\begin{equation}\label{eqlimitZ}
	\Vstar\left(\limsup_{n\rightarrow\infty}\frac{\sum_{i=1}^nZ_i-\mbE\left[\sum_{i=1}^nZ_i\right]}{a_n}>0\enspace or\enspace\liminf_{n\rightarrow\infty}\frac{\sum_{i=1}^nZ_i-\mbe[\sum_{i=1}^nZ_i]}{a_n}<0\right)=0.
\end{equation}
Hence, for \eqref{eq4.7ad} it is sufficient to  that
\begin{align}
  \Vstar & \left(\lim_{n\rightarrow\infty}\max_{a_n+1\leq N\leq a_{n+1}}\left|\frac{S_N}{N}-\frac{\sum_{i=1}^n Z_i}{a_n}\right|\neq0\right)=0,
\label{eq5.10ad}\\
& \;\; \bE\left[\max_{a_n+1\leq N\leq a_{n+1}}\left|\frac{S_N}{N}-\frac{\sum_{i=1}^n Z_i}{a_n}\right|\right]\to 0.
\label{eq5.11ad}
\end{align}

Since $\mbE[|Y_i|]\leq \bE[|X_1|]$ for any $i$, we have
\begin{equation}\label{eqlimitmeanW}
	\frac{\mbE\left|\left[\sum_{i=1}^nW_i\right]\right|}{a_n}\leq \frac{\mbE\left[\sum_{i=1}^n|W_i|\right]}{a_n} \leq\frac{nm\bE[|X_1|]}{a_n}\rightarrow0.
\end{equation}
Let $T_n=\sum_{i=1}^nY_i$. By \eqref{eqlimitW} and \eqref{eqlimitmeanW} it follows that
\begin{equation} \Vstar\left(\lim_{n\rightarrow\infty}\left(\frac{T_{a_n}}{a_n}-\frac{\sum_{i=1}^nZ_i}{a_n}\right)\neq0\right)
=\Vstar\left(\lim_{n\rightarrow\infty}\frac{\sum_{i=1}^nW_i}{a_n}\neq0\right)=0,\label{eq5.7}
\end{equation}
and
\begin{equation}
	\lim_{n\rightarrow\infty}\frac{\mbE[\left|T_{a_n} -\sum_{i=1}^n Z_i\right|]}{a_n}=0.\label{eq5.8}
\end{equation}

For $a_n+1\leq N\leq a_{n+1}$, we have
\begin{align}
	\left|\frac{T_N}{N}-\frac{T_{a_n}}{a_n}\right|
=&\left|\frac{T_N-T_{a_n}}{N}+T_{a_n}\cdot\frac{a_n-N}{Na_n}\right|\nonumber\\
 \leq&\frac{\sum_{i=a_n+1}^{a_{n+1}}|Y_i|}{a_n}+\frac{|T_{a_n}|}{a_n}\cdot\frac{a_{n+1}-a_n}{a_n}.\label{eq5.12}
\end{align}
Note that
$$
	\sum_{i=1}^\infty \Vstar\left(|Y_i|>\frac{i}{M_i^{1/2}}\right)\leq\sum_{i=1}^\infty\frac{M_i\mbE[Y_i^2]}{i^2}<\infty,
$$
it follows from Lemma \ref{BC} that
$$
	\Vstar\left(|Y_i|>\frac{i}{M_i^{1/2}},i.o.\right)=0.
$$
On the event $\left\{|Y_i|>\frac{i}{M_i^{1/2}},i.o.\right\}^c$, for $n$ large enough,
\begin{align}
	\frac{\sum_{i=a_n+1}^{a_{n+1}}|Y_i|}{a_n}
 \leq&\frac{\sum_{i=a_n+1}^{a_{n+1}}i/(M_i^{1/2})}{a_n}\leq \frac{1}{a_nM_{a_n+1}^{1/2}}\sum_{i=a_n+1}^{a_{n+1}}i\nonumber\\
\leq& \frac{a_{n+1}}{a_n}(a_{n+1}-a_n)\frac{1}{M_{a_n+1}^{1/2}}=\left(1+\frac{l_{n+1}}{a_n}\right)l_{n+1}\frac{1}{M_{a_n+1}^{1/2}}\nonumber\\
 \leq&\left(1+\frac{l_{n+1}}{a_n}\right)\frac{1}{M_{a_n+1}^{1/4}}\rightarrow0.\label{eq5.13}
\end{align}
Note that $\frac{a_{n+1}-a_n}{a_n}=\frac{l_{n+1}}{a_n}\rightarrow 0$. We conclude from  \eqref{eq5.12}  - \eqref{eq5.13}  that
\begin{equation}
\Vstar\left(\lim_{n\rightarrow\infty}\max_{a_n+1\leq N\leq a_{n+1}}\left|\frac{T_N}{N}-\frac{T_{a_n}}{a_n}\right|\neq0\right)=0.\label{eq5.14}
\end{equation}
Further, by Lemma \ref{lemma3.5},
$$
	\sum_{i=1}^\infty\hV(X_i\neq Y_i)\leq\sum_{i=1}^\infty\hV(|X_1|>i/2)<\infty.
$$
Hence
\begin{equation}
	\Vstar(X_i\neq Y_i,i.o.)=0.\label{eq5.16}
\end{equation}
By  \eqref{eq5.7},  \eqref{eq5.14}  and \eqref{eq5.16},  \eqref{eq5.10ad}  is proved.

On the other hand,  by Lemma \ref{lemma3.5},
\begin{equation}
\frac{\bE\left[\left|T_{a_n}-S_{a_n}\right|\right]}{a_n}\leq\frac{\sum_{i=1}^{a_n}\bE[(|X_1|-i)^+]}{a_n}\rightarrow 0.\label{eq5.12ad}
\end{equation}
 Similar to \eqref{eq5.12}, we have
\begin{align}
	\bE\left[\max_{a_n+1\leq N\leq a_{n+1}}\left|\frac{S_N}{N}-\frac{S_{a_n}}{a_n}\right|\right]
 \le&\frac{\sum_{i=a_n+1}^{a_{n+1}}\bE[|X_i|]}{a_n}+\frac{\bE[|T_{a_n}|]}{a_n}\cdot\frac{a_{n+1}-a_n}{a_n}\nonumber \\
 \le & 2\bE[|X_1]\frac{l_{n+1}}{a_n}\to 0.\label{eq5.13ad}
\end{align}

Combine  \eqref{eq5.8},  \eqref{eq5.12ad}  and  \eqref{eq5.13ad} yields  \eqref{eq5.11ad}.

\par \textbf{Step 2.} Suppose $\mbE$ satisfies the condition (CC).   We  show that, if $\{X_n;n\geq 1\}$ is a sequence of $m$-dependent and identically distributed random variables   with \eqref{eqth3.1}, then
\begin{equation}
	P\left(\limsup_{n\rightarrow\infty}\frac{S_n-\bE[S_n]}{n}=0\enspace and\enspace\liminf_{n\rightarrow\infty}\frac{S_n-\be[S_n]}{n}=0\right)=1,
\label{eq4.8ad}
\end{equation}

 By  \eqref{eqZ}  and Theorem \ref{th2}(i),   there exists $P\in\mathcal{P}$ such that
$$
	P\left(\limsup_{n\rightarrow\infty}\frac{\sum_{i=1}^n Z_i-\mbE[\sum_{i=1}^n Z_i]}{a_n}=0\enspace and\enspace\liminf_{n\rightarrow\infty}\frac{\sum_{i=1}^n Z_i-\mbe[\sum_{i=1}^n Z_i]}{a_n}=0\right)=1,
$$
which, together with \eqref{eq5.10ad} and \eqref{eq5.11ad}, yields \eqref{eq4.8ad}.

	\par\textbf{Step 3.} We show that, if $\{X_n;n\geq 1\}$ is a sequence of $m$-dependent and linear stationary random variables  with \eqref{eqth3.1},
 then the limits in  \eqref{eqth3.2}  exist.

 Note that if $C_{\hV}(|X|)<\infty$, then $\bE[X]$ and $\be[X]$ are well-defined.  Note that
$$
	\be[X_1]\leq\frac{\sum_{i=1}^n\be[X_i]}{n}\leq\frac{\be[S_n]}{n}\leq\frac{\bE[S_n]}{n}\leq\frac{\sum_{i=1}^n\bE[X_i]}{n}=\bE[X_1],
$$
by Lemma \ref{lemma3.6}. So, there exists a sequence $k_n\nearrow\infty$ such that the limit of $\frac{\bE[S_{k_n}]}{k_n}$ exists as $n\rightarrow\infty$ and we denote the limit by $\um$. Without loss of generality, we can assume that $k_n=o(\sqrt{n})$, otherwise we can let $l_n=\sup\{i:k_i\leq n^{1/4}\}$ and replace $k_n$ with $k_{l_n}$. Now let $m_n=\left[\frac{n}{k_n}\right]$. Then
\begin{align*}
	S_n=&\sum_{i=1}^{m_n}\sum_{j=(i-1)k_n+1}^{ik_n}X_j+S_n-S_{m_nk_n}\\
		=&\sum_{i=1}^{m_n}(S_{ik_n}-S_{(i-1)k_n})+S_n-S_{m_nk_n}\\
		=&\sum_{i=1}^{m_n}(S_{ik_n-m}-S_{(i-1)k_n})+S_n-S_{m_nk_n}+\sum_{i=1}^{m_n}(S_{ik_n}-S_{ik_n-m}).
\end{align*}
Since
\begin{eqnarray*}
	\frac{|\bE[S_n-S_{m_nk_n}]|}{n}\leq\frac{n-m_nk_n}{n}\bE[|X_1|]\rightarrow 0,\\
	\frac{\left|\bE\left[\sum_{i=1}^{m_n}(S_{ik_n}-S_{ik_n-m})\right]\right|}{n}\leq m\frac{m_n}{n}\bE[|X_1|]\rightarrow0,
\end{eqnarray*}
and   $S_{ik_n-m}-S_{(i-1)k_n},i=1,\cdots,m_n$, are independent and identically distributed by the $m$-independence and linear stationary, by Lemma \ref{lemma3.7} we have
\begin{align*}
\frac{\bE[S_n]}{n} =&\frac{\sum_{i=1}^{m_n}\bE[S_{ik_n-m}-S_{(i-1)k_n}]}{n}+o(1)
		=\frac{m_n\bE[S_{k_n-m}]}{n}+o(1)\\
		 =&\frac{m_n\bE[S_{k_n}]}{n}+o(1)
		=\frac{m_nk_n}{n}\frac{\bE[S_{k_n}]}{k_n}+o(1)\rightarrow\um.
\end{align*}  Similarly, $\lm=\lim_{n\rightarrow\infty}\frac{\be[S_n]}{n}$ exists. And the inequalities $\be[X_1]\leq\lm\leq\um\leq\bE[X_1]$ are obvious.

\par \textbf{Step 4.} We show \eqref{eqth3.5}.

Suppose $\mbE$ satisfies the condition (CC).
Let $\underline{\mu}\le \mu_j\le \overline{\mu}$, $x_i=\mbe[Z_i]\vee(\sum_{j=a_{i-1}+1}^{a_{i}}\mu_j)\wedge \mbE[Z_i]$.
By  \eqref{eqZ}  and Theorem \ref{th2}(ii), we have there exists $P\in\mathcal{P}$ such that
$$
	P\left(\lim_{n\rightarrow\infty}\frac{\sum_{i=1}^n (Z_i-x_i)}{a_n}=0\right)=1.
$$
It is easily seen that
$$ \lim_{i\to \infty}\frac{\mbE[Z_i]}{l_i}=\lim_{i\to \infty}\frac{\mbE\left[\sum_{j=a_{i}+1}^{a_{i+1}}Y_i\right]}{l_i}
=\lim_{i\to \infty}\frac{\bE\left[\sum_{j=a_{i}+1}^{a_{i+1}}X_i\right]}{l_i}=\lim_{i\to \infty}\frac{\bE[S_{l_i}]}{l_i}=\overline{\mu},$$
$$ \lim_{i\to \infty}\frac{\mbe[Z_i]}{l_i}=\lim_{i\to \infty}\frac{\mbe\left[\sum_{j=a_{i}+1}^{a_{i+1}}Y_i\right]}{l_i}
=\lim_{i\to \infty}\frac{\be\left[\sum_{j=a_{i}+1}^{a_{i+1}}X_i\right]}{l_i}=\lim_{i\to \infty}\frac{\be[S_{l_i}]}{l_i}=\underline{\mu}$$
and
$$|x_i-\sum_{j=a_{i-1}+1}^{a_i}\mu_j|\le \big|\mbE[Z_i]-l_i\overline{\mu}\big|+\big|\mbe[Z_i]-l_i\underline{\mu}\big|. $$
It follows that
$$ \frac{|\sum_{i=1}^n x_i-\sum_{j=1}^{a_n}\mu_j|}{a_n}\le \frac{\sum_{i=1}^n l_i\Big( \big|\frac{\mbE[Z_i]}{l_i}-\overline{\mu}\big|
+\big|\frac{\mbe[Z_i]}{l_i}-\underline{\mu}\big|\Big)}{a_n}\to 0. $$
Hence
$$
	P\left(\lim_{n\rightarrow\infty}\frac{\sum_{i=1}^n Z_i-\sum_{j=1}^{a_n}\mu_j}{a_n}=0\right)=1.
$$
On the other hand, it is easily seen that
$$ \max_{a_n+1\le N\le a_{n+1}}\left|\frac{\sum_{j=1}^N \mu_j}{N}-\frac{\sum_{j=1}^{a_n}\mu_j}{a_n}\right|\to 0. $$
By \eqref{eq5.14}, \eqref{eqth3.5} is proved.

\par \textbf{Step 5.} At last, we show   \eqref{eqth3.6}.

When $\bE[X_1]=\be[X_1]$, then $\um=\lm$ and \eqref{eqth3.6} follows from \eqref{eqth3.5} immediately. When $\bE[X_1]>\be[X_1]$, there exists $c$ such
that $\alpha=: \mbe[(-c)\wedge X_1\vee c]< \mbE[(-c)\wedge X_1\vee c]:=\beta$. Let $Y_i=(-c)\wedge X_{(m+1)i}\vee c$. Then $\{Y_i;i\ge 1\}$ is a sequence of independent and identically distributed bounded random variables with $\mbE[Y_i]=\beta$ and $\mbe[Y_i]=\alpha$. By \eqref{eqth3.4}, there exists a probability measure $P\in \mathcal{P}$ such that
$$
	P\left(\limsup_{n\rightarrow\infty}\frac{\sum_{i=1}^n Y_i}{n}=\beta \text{ and } \liminf_{n\rightarrow\infty}\frac{\sum_{i=1}^n Y_i}{n}=\alpha\right)=1.
$$
On the other hand, $\{Y_i-E_P[Y_i|\mathcal{F}_{i-1}];i\ge 1\}$ is a sequence of bounded martingale differences under $P$, where $\mathcal{F}_i=\sigma(X_{(m+1)j};j=1,\ldots,i)$. By the law of large numbers of martingales,
$$
	P\left(\lim_{n\rightarrow\infty}\frac{\sum_{i=1}^n (Y_i-E_P[Y_i|\mathcal F_{i-1}])}{n}=0\right)=1.
$$
It follows that
$$
	P\left(\limsup_{n\rightarrow\infty}\frac{\sum_{i=1}^n E_P[Y_i|\mathcal F_{i-1}]}{n}=\beta \text{ and } \liminf_{n\rightarrow\infty}\frac{\sum_{i=1}^n E_P[Y_i|\mathcal F_{i-1}]}{n}=\alpha\right)=1.
$$
Note
$$ \alpha=\mbe[Y_i]\le E_P[Y_i|\mathcal F_{i-1}]\le \mbE[Y_i]=\beta \;\; a.s. \text{ under }P, $$
c.f. Guo, Li and Li \cite{GLL21}.
Hence, there exists a sequence of real numbers $\{y_i;i\ge 1\}$ with $\alpha\le y_i\le \beta$ such that
$$ \limsup_{n\rightarrow\infty}\frac{\sum_{i=1}^n y_i}{n}=\beta \text{ and } \liminf_{n\rightarrow\infty}\frac{\sum_{i=1}^ny_i}{n}=\alpha. $$
Let
$$\mu_i=\frac{y_i-\alpha}{\beta-\alpha}(b-a)+a. $$
Then $\mu_i\in [a,b]$ and
$$ \limsup_{n\rightarrow\infty}\frac{\sum_{i=1}^n \mu_i}{n}=b \text{ and } \liminf_{n\rightarrow\infty}\frac{\sum_{i=1}^n\mu_i}{n}=a, $$
which implies
$$ C\left\{\frac{\sum_{i=1}^n \mu_i}{n}\right\}=[a,b]. $$
\eqref{eqth3.6} follows from \eqref{eqth3.5} immediately.
\end{proof}

\begin{proof}[\bf{Proof of Theorem \ref{th4}}] By Lemma \ref{lemma3.5},
$$
	\sum_{i=1}^{\infty}\hV\left(|X_1|\geq Mi\right)=\infty,\enspace\forall M>0.
$$
There exists $l_i\nearrow \infty$ such that
$$
	\sum_{i=1}^{\infty}\hV\left(|X_1|\geq 2 l_i i \right)=\infty.
$$
By \eqref{eq1.2}, 
$$
	\sum_{i=1}^{\infty}\hV\left(|X_{i(m+1)}|\geq   l_i i \right)=\infty.
$$
By the $m$-dependence, $\{X_{i(m+1)};i\ge 1\}$ are independent under $\mbE$. Hence, by Lemma \ref{lemBC2} (iii) there exists a probability measure $P\in \mathcal{P}$ such that
$$ P\left(|X_{i(m+1)}|\ge l_ii \;\; i.o.\right)=1. $$
On the other hand, on the event $\{|X_{i(m+1)}|\ge l_ii \;\; i.o.\}$, we have
$$ \infty=\limsup_{i\to \infty}\frac{|  X_{i(m+1)}|}{(m+1)i}\le \limsup_{n\to \infty}\frac{|  X_n|}{n}=\limsup_{n\to \infty}\frac{|  S_n-S_{n-1}|}{n}\le 2\limsup_{n\to \infty}\frac{|  S_n|}{n}. $$
\eqref{eqth4.2} is proved.
\end{proof}
\bibliographystyle{plain}

\end{document}